\documentclass{article}

\usepackage{arxiv}

\usepackage[utf8]{inputenc} 
\usepackage[T1]{fontenc}    
\usepackage{hyperref}       
\usepackage{url}            
\usepackage{booktabs}       
\usepackage{amsfonts}       
\usepackage{nicefrac}       
\usepackage{microtype}      
\usepackage{lipsum}
\usepackage{graphicx}
\graphicspath{ {./images/} }

\usepackage{amsmath,amssymb,amsthm,mathtools}
\usepackage{mathrsfs} 
\usepackage{ascmac}
\usepackage{graphicx,color}

\newtheorem{Th}{Theorem}
\newtheorem{Le}{Lemma}
\newtheorem{Co}{Corollary}

\newcommand{\N}{\mathbb{N} }

\newcommand{\R}{\mathbb{R} }

\newcommand{\LA}{\left \langle }
\newcommand{\RA}{\right  \rangle }

\newcommand{\LC}{\left ( }
\newcommand{\RC}{\right ) }
\newcommand{\LD}{\left \{ }
\newcommand{\RD}{\right \} }

\newcommand{\DS}{\displaystyle }
\newcommand{\LN}{\left \|  }
\newcommand{\RN}{\right \| }
\DeclareMathOperator*{\esssup}{ess\sup}
\DeclareMathOperator{\sgn}{sgn}

\title{Doubly nonlinear parabolic equation with perturbation}

\author{
 Shun Uchida \\
    Department of Science and Technology, \\
    Faculty of Science and Technology, \\ 
    Oita University,\\
    700 Dannoharu, Oita City, Oita Pref., 
    870-1192, JAPAN.\\
  \texttt{shunuchida@oita-u.ac.jp} \\
}

\begin{document}
\maketitle
\begin{abstract}
In this paper, we consider the initial boundary value problem of a doubly nonlinear parabolic equation with nonlinear perturbation. 
We impose the homogeneous Dirichlet condition on this problem. 
We aim to reduce the growth condition of the nonlinear term and the largeness of exponent as possible 
so that we can apply our result to both singular and degenerate type parabolic equations.  
We use in our proof the $L ^{\infty }$-estimate of the time-discrete equation derived in the previous work 
and apply the so-called $L ^{\infty }$-energy method to the time-discrete problem.
We also discuss the uniqueness of solution by using the previous result. 
\end{abstract}

\keywords{Doubly nonlinear parabolic equation \and initial boundary value problem \and homogeneous Dirichlet boundary condition \and $p$-Laplacian \and $L ^{\infty }$-energy method \and subdifferential}


\section{Introduction}

In this paper, we are concerned with the initial boundary value
problem of the following doubly nonlinear equation with a perturbation term:
\begin{equation}
\begin{cases}
\partial _t \beta ( u (x, t )) - \nabla \cdot \alpha ( x , \nabla   u (x ,t ) ) 
		= F (\beta  ( u (x,t ) )) + f(x,t ) 
	~~&~~  (x, t )\in Q := \Omega \times (0,T) ,  \\
u (x, t ) = 0  ~~&~~  (x, t )\in \partial  \Omega \times (0,T) , \\
u (x, 0  ) = u_ 0 (x  ) ~~&~~ x \in  \Omega , 
\end{cases}
\label{P}
\tag{P}
\end{equation}
where $\Omega \subset \R ^d $ ($d\geq 1 $) be
a bounded domain with a sufficiently smooth boundary $\partial \Omega $. 
Throughout this paper, we impose the followings on $\alpha $  and $\beta $:
\begin{itemize}
\item[(H.$\alpha $)]  
There exists a $C ^1 $-class function $a : \Omega \times \R ^ d  \to \R $ such that 
$ a (x, \cdot ) : \R ^d \to \R $ is convex and $\alpha (x,  \cdot  ) := D _z a (x, \cdot  )$ 
holds for a.e. $x\in \Omega $.
In addition, 
$a$ and its derivative $\alpha : \Omega \times \R ^d \to \R ^d $ 
satisfy the followings
with some exponent 
$p \in (1, \infty )$ and some constants $c,C > 0$:
\begin{align}
&c |z | ^ p - C  \leq a (x , z ) \leq C  ( |z | ^p  +1  ) , \label{A01} \\
&|\alpha  (x,z ) | \leq C (|z | ^{p-1 }  +1 ) , ~~~\alpha (x, 0 ) = 0 , \label{A02}
\end{align}
and 
\begin{equation}
\begin{split}
(\alpha (x , z _1 ) - \alpha (x,  z_2 )) \cdot (z _1 - z_2 ) \geq c |z _1 - z _ 2 | ^p ~~&~~\text{if } p \geq 2 , \\
(\alpha (x , z _1 ) - \alpha (x,  z_2 )) \cdot (z _1 - z_2 ) \geq \frac{ c |z _1 - z _ 2 | ^2}{ |z _1 | ^{2-p } + |z _2 | ^{2-p }  }  ~~&~~\text{if } 1 < p < 2 ,
\end{split}
\label{A03}
\end{equation}
for every $z , z_ 1 , z_ 2 \in \R ^d $ and almost every $x \in \Omega $.

\item[(H.$\beta $)] 
There exists a $C ^1 $-class convex function $j  :  \R  \to ( - \infty , + \infty ]  $ such that 
$\beta  :=  j '   $ 
holds and $\beta : \R \to \R $ satisfies $\beta (0 ) = 0 $.
\end{itemize}

A typical example of the main term $\nabla \cdot \alpha ( \cdot , \nabla u)$ 
is the so-called $p$-Laplacian 
$\Delta _p u := \nabla \cdot (| \nabla u | ^{p-2 }  \nabla u ) $,
which satisfies (H.$\alpha $) with $a (x ,z ) = \frac{1}{p} |z| ^p  $ 
and $\alpha (x ,z ) = |z| ^{p-2} z $ ($p\in (1, \infty )$). 
The sum of a finite number of $p_i$-Laplacian ($i = 1, \ldots ,n$) also fulfills (H.$\alpha $) with $p = \max _ {i =1 , \ldots, n }p_i$. 
By (H.$\beta $), we can see that $\beta $ is non-decreasing function (see the next section). 
Remark that there might be some $s \in \R $ such that $\beta (s), \beta ^{-1} (s) = \varnothing $, 
namely, we can deal with the case where, for instance, 
\begin{align*}
\beta (s) & = \tan s ~~~~~~~& s \in (- \pi /2 , \pi /2 ) ,  \\   
\beta (s) & =  -s / (s+1 ) ~~~~~~~& s \in (-1  , \infty  ) ,  \\
\beta (s) & =  \log (s +1) ~~~~~~~& s \in (- 1  , \infty  ) .  
\end{align*}

As for the case where $F \equiv 0$ (without perturbation terms),
there are numerous previous works. 
For instance, \cite{Ba, BarV, GM} discuss the existence of solution to the doubly nonlinear equation
and its uniqueness is considered in \cite{Car, U}. 
On the other hand,  there are a few results for the case where $F \not \equiv 0$. 
For example, 
\cite{AL} show the existence of solution to \eqref{P} with general nonlinearity $\beta $ 
under some growth condition of $F$. 
In \cite{G}, the case where $ p \geq 2 $ and $F$ is bounded by some polynomial is considered. 
In \cite{HO, HM}, they deal with the solvability of \eqref{P} and existence of global attractor 
under the locally Lipschitz continuity of $\beta $ and some growth condition of $F$.

The equation \eqref{P} is classified as a degenerate/singular type parabolic equation 
depending on the conditions on $\beta $ and the largeness of exponent $p$.
Especially, 
we can see that the solution to the singular parabolic equations might become zero at a finite time, 
which is the so-called extinction phenomena. 
On the other hand, it is well known that the growth term $F (\beta (u))$ might cause 
the blow-up of the solution in finite time. 
Hence in the equation \eqref{P}, 
the conflict of the extinction and the blow-up might occurs 
and then the time global behavior of solutions seems to be an interesting problem. 
However, in many papers, 
they assume some condition to $\beta , p , F $
and they restrict themselves to the case where \eqref{P} is a degenerate type parabolic equation.

 In this paper, we aim to reduce the assumption of $\beta $ 
and the growth condition $F $, and the largeness of $p $ as possible 
and assure the solvability of \eqref{P}. 
 We use in our proof the $L ^{\infty }$-estimate of the time-discrete equation 
 derived in \cite{U}
and apply the so-called $L^{\infty }$-energy method to the  time-discrete problem. 
More precisely, we shall prove the following results in this paper 
(notation is defined in the next section). 
\begin{Th}
\label{Th01} 
Assume (H.$\alpha $) and (H.$\beta $).
Let $F : \R \to \R $ is continuous and $\beta ^{-1} : \R \to \R $
is locally Lipschitz continuous. 
Then for every $u _ 0 \in L ^{\infty } (\Omega  )$ with $ \beta (u _ 0) \in L ^{\infty } (\Omega  )$
and $f \in L ^{\infty } (Q  )$, 
there exists $ T' \in ( 0 , T ] $ such that 
\eqref{P} has at least one solution which satisfies 
\begin{align*}
u &\in L^{\infty } (0,T' ; W^{1, p } _ 0  (\Omega )) \cap 
		L^{\infty } (0,T' ; L ^{\infty }  (\Omega ) )  \cap  W^{1, 2 }   (0,T'  ; L^2   (\Omega )) ,\\
\beta (u) &\in 
		L^{\infty } (0,T' ; L ^{\infty }  (\Omega ) )  \cap W^{1, p'  }   (0,T'  ; W ^{-1, p' } (\Omega )),
\end{align*}
and 
fulfill \eqref{P}  in $L ^{p'} ( 0,T' ; W ^{-1 , p ' } (\Omega ) ) $. 
\end{Th}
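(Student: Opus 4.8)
My plan is to discretize \eqref{P} in time by the backward Euler scheme with the perturbation treated explicitly, to extract from \cite{U} a uniform $L^{\infty}$-bound for the time-discrete solutions valid on a short interval, and then to pass to the limit. Fix $N \in \N$, put $h = T/N$, $t_n = nh$, $f_n := h^{-1}\int_{t_{n-1}}^{t_n} f(\cdot,t)\,dt \in L^{\infty}(\Omega)$, $u_0^h := u_0$, and define $u_n^h \in W^{1,p}_0(\Omega)$ recursively as a solution of
\[
\beta(u_n^h) - h\,\nabla\cdot\alpha(x,\nabla u_n^h) = \beta(u_{n-1}^h) + h\,F(\beta(u_{n-1}^h)) + h f_n =: g_n^h \text{ in } \Omega, \qquad u_n^h|_{\partial\Omega} = 0.
\]
Since $j$ and $a(x,\cdot)$ are convex with $\beta = j'$, $\alpha = D_z a$, this is the Euler--Lagrange equation of the functional $v \mapsto \int_{\Omega}\bigl(j(v) + h\,a(x,\nabla v) - g_n^h v\bigr)\,dx$, which is proper, strictly convex and coercive on $W^{1,p}_0(\Omega)$ by \eqref{A01}; hence $u_n^h$ exists, and since $g_n^h \in L^{\infty}(\Omega)$ the $L^{\infty}$-estimate of \cite{U} applies and gives $u_n^h, \beta(u_n^h) \in L^{\infty}(\Omega)$ with $\|\beta(u_n^h)\|_{L^{\infty}(\Omega)} \le \|g_n^h\|_{L^{\infty}(\Omega)}$.

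The crucial point is to upgrade this into a bound that is \emph{uniform in $h$ but only local in time}. Writing $M_n^h := \|\beta(u_n^h)\|_{L^{\infty}(\Omega)}$ and $\Phi(r) := \sup_{|s| \le r}|F(s)| + \|f\|_{L^{\infty}(Q)}$ --- continuous and nondecreasing because $F$ is continuous --- the above estimate yields the discrete inequality $M_n^h \le M_{n-1}^h + h\,\Phi(M_{n-1}^h)$, $M_0^h = \|\beta(u_0)\|_{L^{\infty}(\Omega)}$, which is exactly the explicit Euler iteration for the scalar ODE $y' = \Phi(y)$, $y(0) = M_0^h$. This ODE has a (possibly non-global) solution on a maximal interval $[0,T^{\ast})$; fixing $T' \in (0,\min\{T,T^{\ast}\})$ and comparing the iteration with the ODE, one obtains $\sup_{t_n \le T'}M_n^h \le R$ with $R$ independent of $h$ for all small $h$. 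Since $\beta^{-1}$ is continuous with $\beta^{-1}(0) = 0$, this gives $\|u_n^h\|_{L^{\infty}(\Omega)} \le \sup_{|s| \le R}|\beta^{-1}(s)| =: R'$, and continuity of $F$ gives $\|F(\beta(u_n^h))\|_{L^{\infty}(\Omega)} \le \sup_{|s|\le R}|F(s)| =: K$, all uniformly in $n$ (with $t_n \le T'$) and in $h$. I expect this to be the main obstacle: the theorem imposes no growth restriction on $F$, so a local $L^{\infty}$-control of $\beta(u)$ is the only thing that makes $F(\beta(u))$ tractable, this is precisely what the discretization together with \cite{U} is designed to supply, and it is why $T'$ may be strictly smaller than $T$.

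Once $F(\beta(u_n^h))$ and $f_n$ are bounded in $L^{\infty}(\Omega)$ uniformly, the remaining a priori estimates on $[0,T']$ are standard. Testing the $n$-th equation with $(u_n^h - u_{n-1}^h)/h$ and summing, I would use the local Lipschitz bound $|u_n^h - u_{n-1}^h| \le L\,|\beta(u_n^h) - \beta(u_{n-1}^h)|$ on $[-R,R]$ (so the $\beta$-term controls $L^{-1}\|(u_n^h - u_{n-1}^h)/h\|_{L^2(\Omega)}^2$) and the convexity inequality $\alpha(x,\nabla u_n^h)\cdot\nabla(u_n^h - u_{n-1}^h) \ge a(x,\nabla u_n^h) - a(x,\nabla u_{n-1}^h)$ together with \eqref{A01}. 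This yields, for the piecewise-linear and piecewise-constant interpolants $\widehat{u}_h, \overline{u}_h$, uniform bounds of $\widehat{u}_h$ in $W^{1,2}(0,T';L^2(\Omega))$ and of $\widehat{u}_h, \overline{u}_h$ in $L^{\infty}(0,T';W^{1,p}_0(\Omega)) \cap L^{\infty}(Q)$ (here one also uses $a(\cdot,\nabla u_0) \in L^1(\Omega)$; if $u_0 \notin W^{1,p}_0(\Omega)$ a time-weighted test function still gives these bounds away from $t = 0$). Rewriting the scheme as $\partial_t\widehat{\beta(u)}_h - \nabla\cdot\alpha(x,\nabla\overline{u}_h) = \overline{F(\beta(u))}_h + \overline{f}_h$ in $L^{p'}(0,T';W^{-1,p'}(\Omega))$ (with $\overline{F(\beta(u))}_h$ the backward interpolant) and invoking \eqref{A02}, the right-hand side is bounded in $L^{\infty}(Q)$ and $\alpha(x,\nabla\overline{u}_h)$ in $L^{p'}(Q)^d$, so $\partial_t\widehat{\beta(u)}_h$ is bounded in $L^{p'}(0,T';W^{-1,p'}(\Omega))$.

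Finally I would let $h \to 0$ along a subsequence. Aubin--Lions--Simon ($W^{1,p}_0 \hookrightarrow\hookrightarrow L^2$, $\partial_t\widehat{u}_h$ bounded in $L^2(Q)$) gives $\widehat{u}_h \to u$ in $C([0,T'];L^2(\Omega))$, and since $\|\overline{u}_h - \widehat{u}_h\|_{L^2(Q)} \to 0$ also $\overline{u}_h \to u$ in $L^2(Q)$ and a.e.\ on $Q$. Continuity of $\beta, F$ and the uniform $L^{\infty}$-bounds give, by dominated convergence, $\beta(\overline{u}_h) \to \beta(u)$ and $\overline{F(\beta(u))}_h \to F(\beta(u))$ in every $L^q(Q)$, $q < \infty$, while $\overline{u}_h \rightharpoonup u$ weakly-$\ast$ in $L^{\infty}(0,T';W^{1,p}_0(\Omega))$, $\alpha(x,\nabla\overline{u}_h) \rightharpoonup \chi$ in $L^{p'}(Q)^d$, $\widehat{u}_h \rightharpoonup u$ in $W^{1,2}(0,T';L^2(\Omega))$ and $\partial_t\widehat{\beta(u)}_h \rightharpoonup \partial_t\beta(u)$ in $L^{p'}(0,T';W^{-1,p'}(\Omega))$. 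Passing to the limit in the scheme gives $\partial_t\beta(u) - \nabla\cdot\chi = F(\beta(u)) + f$ in $L^{p'}(0,T';W^{-1,p'}(\Omega))$, with $u(0) = u_0$ from the $C([0,T'];L^2)$-convergence and with the stated regularity following from the weak/weak-$\ast$ limits (and $\partial_t\beta(u) = \nabla\cdot\alpha(x,\nabla u) + F(\beta(u)) + f$ once the identification below is done). It remains to show $\chi = \alpha(x,\nabla u)$: testing the discrete equation with $\overline{u}_h$, using the subgradient inequality $\langle\beta(u_n^h) - \beta(u_{n-1}^h), u_n^h\rangle \ge \int_{\Omega}(j(u_n^h) - j(u_{n-1}^h))$ and Fatou's lemma, and combining with the chain-rule identity $\int_0^{T'}\langle\partial_t\beta(u), u\rangle\,dt = \int_{\Omega}j(u(T')) - \int_{\Omega}j(u_0)$ --- an Alt--Luckhaus-type lemma valid under (H.$\beta$), which I regard as the second technical ingredient --- one gets $\limsup_h \int_0^{T'}\!\int_{\Omega}\alpha(x,\nabla\overline{u}_h)\cdot\nabla\overline{u}_h \le \int_0^{T'}\!\int_{\Omega}\chi\cdot\nabla u$, whence Minty's monotonicity trick yields $\chi = \alpha(x,\nabla u)$ and $u$ solves \eqref{P} in $L^{p'}(0,T';W^{-1,p'}(\Omega))$.
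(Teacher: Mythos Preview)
Your overall route---time discretization with explicit perturbation, the $L^\infty$-estimate of \cite{U} at each step, energy estimates, Aubin--Lions compactness, and Minty's trick for identifying $\alpha(x,\nabla u)$---is the paper's. The main departure is in how you localize in time: the paper first truncates $F$ to a bounded $F^M$, obtains \emph{global} uniform bounds on $[0,T]$ (depending on $M$), passes to the limit in the equation with $F^M$, and only at the very end fixes $M$ and $T'$ so that $F^M(\beta(u))=F(\beta(u))$ on $(0,T')$; you keep $F$ untruncated and control $\|\beta(u_n^h)\|_{L^\infty}$ by comparison with the ODE $y'=\Phi(y)$. Your way is more direct; the truncation buys the paper a clean separation (all estimates are on $[0,T]$ until the last paragraph). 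A second difference: the paper derives the uniform $L^\infty$-bound on $u$ by a Moser-type iteration (the ``$L^\infty$-energy method''), whereas you read it off from the bound on $\beta(u)$ via continuity of $\beta^{-1}$. Your shortcut is legitimate here because Theorem~\ref{Th01} assumes $\beta^{-1}:\R\to\R$ is locally Lipschitz; the paper's iteration is heavier but is reused verbatim for Theorem~\ref{Th02}, where $\beta^{-1}$ need not be globally defined.

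One correction: your subgradient and chain-rule identities should involve $j^\ast$, not $j$. The inequality that actually holds is $(\beta(u_n^h)-\beta(u_{n-1}^h))\,u_n^h \ge j^\ast(\beta(u_n^h)) - j^\ast(\beta(u_{n-1}^h))$ (since $u_n^h\in\partial j^\ast(\beta(u_n^h))$), and correspondingly $\int_0^{T'}\langle\partial_t\beta(u),u\rangle\,dt = \int_\Omega j^\ast(\beta(u(T')))\,dx - \int_\Omega j^\ast(\beta(u_0))\,dx$; this is precisely the paper's Lemma~\ref{IBP}. With $j$ in place of $j^\ast$ the inequalities are false in general. Finally, the paper, like you, tacitly uses $\int_\Omega a(x,\nabla u_0)\,dx<\infty$ when testing with $u_n^h-u_{n-1}^h$; your parenthetical remark about a time-weighted test is the standard fix and is more than the paper says.
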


\begin{Th}
\label{Th02}
Assume (H.$\alpha $) and (H.$\beta $).
Let $F : \R \to \R $ is continuous and $\beta : \R \to \R $
is locally Lipschitz continuous. 
Then for every $u _ 0 \in L ^{\infty } (\Omega  )$ with $ \beta (u _ 0) \in L ^{\infty } (\Omega  )$
and $f \in L ^{\infty } (Q)$, 
there exists $ T' \in ( 0 , T ] $ such that 
\eqref{P} has at least one solution  which satisfies 
\begin{align*}
u &\in L^{p } (0,T' ; W^{1, p } _ 0  (\Omega )) \cap 
		L^{\infty } (0,T' ; L ^{\infty }  (\Omega ) ) , \\
\beta (u) &\in 
		L^{\infty } (0,T' ; L ^{\infty }  (\Omega ) )  \cap W^{1, p'  }   (0,T'  ; W ^{-1, p' } (\Omega )),
\end{align*}
and 
fulfill \eqref{P}  in $L ^{p'} ( 0,T' ; W ^{-1 , p ' } (\Omega ) ) $. 
\end{Th}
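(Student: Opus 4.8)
The plan is to prove Theorem~\ref{Th02} by the same scheme as Theorem~\ref{Th01}, reusing the time-discretization and the $L^\infty$-estimate from \cite{U}, and only tracking where the weaker regularity $\beta \in W^{1,\infty}_{\mathrm{loc}}$ (in place of $\beta^{-1} \in W^{1,\infty}_{\mathrm{loc}}$) forces a different argument. First I would fix a small $T' \in (0,T]$ and discretize in time with step $h = T'/N$, writing the implicit Euler scheme
\begin{equation*}
\frac{\beta(u_n) - \beta(u_{n-1})}{h} - \nabla \cdot \alpha(x, \nabla u_n) = F(\beta(u_{n-1})) + f_n
\end{equation*}
in $W^{-1,p'}(\Omega)$, with $f_n$ the time-average of $f$ on $((n-1)h, nh]$. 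Solvability of each stationary step is standard (it is a subdifferential inclusion governed by the convex functional $v \mapsto \int_\Omega a(x,\nabla v)\,dx$ plus the maximal monotone graph $\beta$); the crucial input is the $L^\infty$-bound from \cite{U}, which gives $\|u_n\|_{L^\infty} , \|\beta(u_n)\|_{L^\infty} \le M$ for all $n$ provided $T'$ is chosen small enough depending on $\|u_0\|_{L^\infty}$, $\|\beta(u_0)\|_{L^\infty}$, $\|f\|_{L^\infty(Q)}$ and the modulus of continuity of $F$ on a bounded set. This is the step that pins down $T'$ and controls the right-hand side $F(\beta(u_{n-1}))$ uniformly.

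Next I would derive the a priori estimates. Testing the discrete equation with $u_n$ and using the convexity inequality $\langle \beta(u_n) - \beta(u_{n-1}), u_n \rangle \ge \int_\Omega j(u_n) - j(u_{n-1})\,dx$ together with the coercivity \eqref{A01}, summing over $n$, and absorbing $F(\beta(u_{n-1})) + f_n$ via the uniform $L^\infty$-bound, yields a bound for $\{u_n\}$ in $\ell^p(W^{1,p}_0)$ weighted by $h$, i.e. a bound for the piecewise-constant interpolant $\bar u_h$ in $L^p(0,T';W^{1,p}_0(\Omega))$. From \eqref{A02} this bounds $\alpha(\cdot,\nabla \bar u_h)$ in $L^{p'}(Q)^d$, hence $\nabla\cdot\alpha(\cdot,\nabla\bar u_h)$ in $L^{p'}(0,T';W^{-1,p'}(\Omega))$, and then the discrete equation bounds the discrete time-derivative of $\beta$, giving $\widehat{\beta(u)}_h$ (the piecewise-linear interpolant) bounded in $W^{1,p'}(0,T';W^{-1,p'}(\Omega))$; the $L^\infty$-bound on $\beta(u_n)$ gives $\beta(\bar u_h)$ bounded in $L^\infty(Q)$. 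Note that here, unlike in Theorem~\ref{Th01}, I do \emph{not} expect a bound on $\partial_t u$ in $L^2(L^2)$, because that estimate used local Lipschitz continuity of $\beta^{-1}$; this is exactly why the conclusion only asserts $u \in L^p(0,T';W^{1,p}_0)$ rather than $L^\infty(0,T';W^{1,p}_0) \cap W^{1,2}(0,T';L^2)$.

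The main obstacle is the passage to the limit $h \to 0$, specifically identifying the nonlinear limits $\beta(u)$ and $\alpha(x,\nabla u)$ without strong compactness of $\partial_t u$. The bounds above give, along a subsequence, $\bar u_h \rightharpoonup u$ weakly in $L^p(0,T';W^{1,p}_0)$ and weakly-$*$ in $L^\infty(Q)$, $\beta(\bar u_h) \rightharpoonup^* \chi$ in $L^\infty(Q)$ with $\partial_t \chi \in L^{p'}(0,T';W^{-1,p'}(\Omega))$, and $\alpha(\cdot,\nabla\bar u_h) \rightharpoonup \xi$ in $L^{p'}(Q)^d$. The identification $\chi = \beta(u)$ should follow from an Aubin--Lions--Simon-type compactness argument: $\beta(\bar u_h)$ is bounded in $L^\infty(Q) \hookrightarrow L^2(Q)$ and its time-translates are controlled in $W^{-1,p'}$ via the equation, so $\beta(\bar u_h) \to \chi$ strongly in, say, $C([0,T'];W^{-1,p'}(\Omega))$ or $L^2(0,T';H^{-1}(\Omega))$; combining this strong convergence of $\beta(\bar u_h)$ with the weak convergence of $\bar u_h$ and the maximal monotonicity of the graph $(u \mapsto \beta(u))$ identified with a subdifferential in duality $(W^{1,p}_0, W^{-1,p'})$, one gets $\chi = \beta(u)$ a.e. Here the Lipschitz continuity of $\beta$ helps: $\beta(\bar u_h)$ inherits at least as much oscillation control as $\bar u_h$, and passing $\beta$ through the weak limit is legitimate once we have $\limsup \langle \beta(\bar u_h), \bar u_h\rangle \le \langle \chi, u\rangle$, which the $C([0,T'];W^{-1,p'})$ convergence of $\beta(\bar u_h)$ supplies. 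With $\chi = \beta(u)$ in hand, the standard monotonicity trick (Minty's device) applied to the elliptic part $\alpha$, using \eqref{A03} and the energy inequality obtained by lower semicontinuity from the discrete identity, identifies $\xi = \alpha(x,\nabla u)$; finally continuity of $F$ together with $\beta(\bar u_h) \to \beta(u)$ strongly in $L^2(Q)$ (up to a further subsequence, a.e.) gives $F(\beta(\bar u_h)) \to F(\beta(u))$, closing the equation in $L^{p'}(0,T';W^{-1,p'}(\Omega))$ and verifying the initial condition from the $C([0,T'];W^{-1,p'})$ convergence.
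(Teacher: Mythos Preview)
Your overall scheme is the right one and matches the paper's, but there is a genuine gap in the compactness step, and it is precisely where the hypothesis ``$\beta$ locally Lipschitz'' is actually used. You invoke it only vaguely (``$\beta(\bar u_h)$ inherits at least as much oscillation control as $\bar u_h$''), and your Aubin--Lions input for $\beta(\bar u_h)$ is merely the $L^\infty(Q)$ bound together with the $L^{p'}(W^{-1,p'})$ control on the time derivative. That combination yields compactness only in $C([0,T'];W^{-1,p'})$ (or $L^2(0,T';H^{-1})$), which, as you correctly note, suffices to identify $\chi=\beta(u)$ via monotonicity in the $L^p(W^{1,p}_0)\times L^{p'}(W^{-1,p'})$ pairing. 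But it does \emph{not} give a.e.\ or strong $L^q$ convergence of $\beta(\bar u_h)$, and without that you cannot pass to the limit in $F(\beta(\bar u_h))$ for a merely continuous~$F$: weak-$\ast$ convergence of $\beta(\bar u_h)$ in $L^\infty$ says nothing about the weak limit of $F(\beta(\bar u_h))$. The paper closes this gap by using the Lipschitz hypothesis concretely: since $\bar u_h$ is uniformly bounded in $L^\infty$, the chain rule $|\nabla\beta(u_n)|\le \mathrm{Lip}(\beta|_{[-M,M]})\,|\nabla u_n|$ transfers the $L^p(0,T';W^{1,p}_0)$ bound from $\bar u_h$ to $\beta(\bar u_h)$ (this is \eqref{P015}). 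One can then run Aubin--Lions with $X_0=W^{1,p}_0\cap L^\infty$, $X=L^{p'}$, $X_1=W^{-1,p'}$, obtaining strong $L^p(0,T';L^{p'})$ convergence of $\beta(\bar u_h)$, hence a.e.\ convergence and, by dominated convergence with the $L^\infty$ bound, strong $L^q(Q)$ convergence for every $q\ge 1$; then $F^M(\beta(\bar u_h))\to F^M(\beta(u))$ strongly.

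A second point you leave too implicit is the Minty step for $\alpha$. Obtaining
\[
\limsup_{h\to 0}\int_0^{T'}\!\big\langle -\nabla\!\cdot\alpha(\cdot,\nabla\bar u_h),\,\bar u_h\big\rangle_{W^{1,p}}\,dt \;\le\; \int_0^{T'}\!\langle\eta,u\rangle_{W^{1,p}}\,dt
\]
requires two ingredients that your phrase ``energy inequality obtained by lower semicontinuity'' does not cover. On the limit side one needs the Alt--Luckhaus chain rule (Lemma~\ref{IBP}),
\[
\int_\Omega j^{\ast}(\xi(T'))\,dx-\int_\Omega j^{\ast}(\beta(u_0))\,dx=\int_0^{T'}\!\langle\partial_t\xi,u\rangle_{W^{1,p}}\,dt,
\]
valid at a Lebesgue point $T'$; on the discrete side one needs $\liminf_{h\to 0}\int_\Omega j^{\ast}(\Lambda_h\beta(u_h)(T'))\,dx\ge \int_\Omega j^{\ast}(\xi(T'))\,dx$, which the paper gets from $C([0,T'];W^{-1,p'})$-convergence at $t=T'$ upgraded to weak $L^{p'}(\Omega)$-convergence via the uniform $L^\infty$ bound, together with weak lower semicontinuity of $w\mapsto\int_\Omega j^{\ast}(w)\,dx$. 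You never mention the chain rule, and it is the nontrivial half of this argument.
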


By appending additional condition to the assumptions in Theorem \ref{Th02},
we can slightly recover the regularity of solution. 
\begin{Th}
\label{Th03} 
In addition to the assumptions in Theorem \ref{Th02}, 
assume that $D (\beta ) = \R $ and $F $ is monotone increasing, and $f \equiv 0 $. 
Then for every $u _ 0 \in L ^{\infty } (\Omega  )  \cap W ^{1, p } _ 0  (\Omega ) $ 
		with $ \beta (u _ 0) \in L ^{\infty } (\Omega  )$, 
there exists $ T' \in ( 0 , T ] $ such that 
\eqref{P} has at least one solution  which satisfies 
\begin{align*}
u \in   L^{ \infty  } (0,T' ; W^{1, p } _ 0  (\Omega ))  \cap 
		L^{\infty } (0,T' ; L ^{\infty }  (\Omega ) )  \\
\beta (u) \in 
		L^{\infty } (0,T' ; L ^{\infty }  (\Omega ) )  \cap W^{1, p'  }   (0,T'  ; W ^{-1, p' } (\Omega ))
\end{align*}
and 
fulfill \eqref{P}  in $L ^{p'} ( 0,T' ; W ^{-1 , p ' } (\Omega ) ) $. 
\end{Th}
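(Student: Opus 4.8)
The plan is to obtain Theorem~\ref{Th03} as a refinement of Theorem~\ref{Th02}, \emph{reusing the very same approximate (time-discrete) solutions} and merely extracting one additional uniform estimate before passing to the limit. Recall that the proof of Theorem~\ref{Th02} proceeds by solving the implicit Euler scheme
\begin{equation*}
\frac{\beta(u_{n}) - \beta(u_{n-1})}{h} - \nabla\cdot\alpha(x,\nabla u_{n}) = F(\beta(u_{n-1})) + f_{n},\qquad u_{n}\in W^{1,p}_{0}(\Omega),
\end{equation*}
with step $h = T'/N$, deriving the $L^{\infty}$-bound on $u_{n}$ and $\beta(u_{n})$ from \cite{U}, a $W^{1,p}_{0}$-type energy bound, and a bound on the discrete time-derivative of $\beta(u)$ in $W^{-1,p'}$; then one builds piecewise-constant / piecewise-linear interpolants $\overline{u}_{N}$, $\widehat{\beta(u)}_{N}$, extracts weakly/weakly-$*$ convergent subsequences, and identifies the limit using monotonicity of $\alpha(x,\cdot)$ and continuity of $F$ and $\beta$. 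Under the extra hypotheses $D(\beta)=\R$, $F$ monotone increasing, $f\equiv 0$, and $u_{0}\in W^{1,p}_{0}(\Omega)$, the goal is the upgrade $u\in L^{\infty}(0,T';W^{1,p}_{0}(\Omega))$ in place of $u\in L^{p}(0,T';W^{1,p}_{0}(\Omega))$.

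The key new step is a \textbf{discrete energy estimate} obtained by testing the $n$-th scheme equation with $u_{n} - u_{n-1}$ (rather than with $u_{n}$). Using (H.$\beta$) and convexity of $j$, the parabolic term yields
\begin{equation*}
\Big(\frac{\beta(u_{n})-\beta(u_{n-1})}{h},\,u_{n}-u_{n-1}\Big)_{L^{2}} \;\geq\; 0,
\end{equation*}
since $\beta$ is nondecreasing (this is exactly where $(\beta(s)-\beta(t))(s-t)\ge0$ is used, and $D(\beta)=\R$ guarantees $u_{n}-u_{n-1}$ is an admissible test function). The diffusion term, via the convexity inequality $a(x,\nabla u_{n-1}) \geq a(x,\nabla u_{n}) + \alpha(x,\nabla u_{n})\cdot(\nabla u_{n-1}-\nabla u_{n})$ (equivalently $\alpha(x,\nabla u_{n})\cdot(\nabla u_{n}-\nabla u_{n-1}) \geq a(x,\nabla u_{n}) - a(x,\nabla u_{n-1})$), produces the telescoping lower bound $\int_{\Omega} a(x,\nabla u_{n}) - \int_{\Omega} a(x,\nabla u_{n-1})$. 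Hence, after summation over $n=1,\dots,m$,
\begin{equation*}
\int_{\Omega} a(x,\nabla u_{m})\,dx \;\leq\; \int_{\Omega} a(x,\nabla u_{0})\,dx \;+\; \sum_{n=1}^{m} \big(F(\beta(u_{n-1})),\, u_{n}-u_{n-1}\big)_{L^{2}}.
\end{equation*}
The remaining sum must be controlled: here I would use that $F$ is monotone increasing together with the $L^{\infty}$-bound on $\beta(u_{n})$ (already available from Theorem~\ref{Th02}'s proof / \cite{U}) to write $F(\beta(u_{n-1})) = G'(\beta(u_{n-1}))$-type or, more directly, to bound $\big(F(\beta(u_{n-1})),u_{n}-u_{n-1}\big)$ by a telescoping expression plus lower-order terms. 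Concretely, since $\beta$ is nondecreasing and locally Lipschitz and $F$ is increasing, the composition $x\mapsto \int_{0}^{u_{n}} F(\beta(\sigma))\,d\sigma$ is a convex $C^{1}$ function $\Phi$, and $(F(\beta(u_{n})), u_{n}-u_{n-1}) \geq \Phi(u_{n}) - \Phi(u_{n-1})$; a further elementary estimate (using $\|u_{n}\|_{\infty}$ bounded) replaces $F(\beta(u_{n-1}))$ by $F(\beta(u_{n}))$ at the cost of $C h\,\|u_{n}-u_{n-1}\|_{L^{2}}\|\beta(u_{n})-\beta(u_{n-1})\|$-type remainders, which are absorbed by the $W^{1,2}(0,T';L^{2})$-bound on $u$ times $h$. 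Combined with \eqref{A01} this gives $\sup_{m}\|\nabla u_{m}\|_{L^{p}(\Omega)} \leq C$ uniformly in $h$, where $C$ depends only on $\|u_{0}\|_{W^{1,p}_{0}}$, $\|u_{0}\|_{\infty}$, $\|\beta(u_{0})\|_{\infty}$, $T'$ and the structural constants.

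Once the uniform bound $\|\overline{u}_{N}\|_{L^{\infty}(0,T';W^{1,p}_{0}(\Omega))} \leq C$ is in hand, the passage to the limit is the same as in Theorem~\ref{Th02}: by Banach--Alaoglu we extract a subsequence with $\overline{u}_{N} \rightharpoonup^{*} u$ in $L^{\infty}(0,T';W^{1,p}_{0}(\Omega))$, and since this limit must coincide with the limit already produced in Theorem~\ref{Th02}, the solution $u$ constructed there in fact enjoys the improved regularity; all equations are satisfied in $L^{p'}(0,T';W^{-1,p'}(\Omega))$ exactly as before, and the $L^{\infty}$-bounds on $u$ and $\beta(u)$ and the bound on $\partial_{t}\beta(u)$ in $W^{1,p'}(0,T';W^{-1,p'}(\Omega))$ carry over verbatim. \textbf{The main obstacle} is the rigorous control of the discrete work term $\sum_{n} (F(\beta(u_{n-1})), u_{n}-u_{n-1})$ uniformly in $h$: one needs the monotonicity of $F$ precisely to turn it into a telescoping (convex-function-difference) term rather than something growing like $\sum_n \|u_n - u_{n-1}\|$, which the scheme does not control without an extra factor of $h$; the hypotheses $D(\beta)=\R$ (admissibility of $u_{n}-u_{n-1}$ as a test function and finiteness of $j$) and $f\equiv 0$ (removing a term $\sum_n(f_n,u_n-u_{n-1})$ that would otherwise need a discrete $\partial_t f$ bound) are exactly what make this estimate close cleanly. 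A secondary, routine point is checking that the interpolation errors between $\overline{u}_{N}$ and the piecewise-linear interpolant are $O(h)$ in the relevant norms so that the discrete estimate genuinely yields the continuous-time $L^{\infty}$-in-time bound.
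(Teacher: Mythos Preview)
Your overall strategy coincides with the paper's: test the discrete scheme with $u_{n}-u_{n-1}$, drop the parabolic term by monotonicity of $\beta$, telescope the elliptic part via convexity of $a(x,\cdot)$, and introduce the convex potential $\Phi(s)=\int_{0}^{s}F^{M}(\beta(\sigma))\,d\sigma$ to handle the forcing. But you take an unnecessary detour on the $F$ term. The scheme carries the \emph{explicit} value $F^{M}(\beta(u_{n-1}))$, and since $F^{M}\circ\beta$ is nondecreasing one has $F^{M}(\beta(u_{n-1}))\in\partial\Phi(u_{n-1})$. The subdifferential inequality \emph{at $u_{n-1}$} therefore gives directly
\[
\bigl(F^{M}(\beta(u_{n-1})),\,u_{n}-u_{n-1}\bigr)_{L^{2}}\;\le\;\int_{\Omega}\Phi(u_{n})\,dx-\int_{\Omega}\Phi(u_{n-1})\,dx,
\]
which telescopes to $\int_{\Omega}\Phi(u_{m})-\int_{\Omega}\Phi(u_{0})\le M\,|\Omega|\bigl(\|u_{m}\|_{L^{\infty}}+\|u_{0}\|_{L^{\infty}}\bigr)$, already bounded by the $L^{\infty}$ estimate. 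This is exactly what the paper does; no shift from $u_{n-1}$ to $u_{n}$ and no remainder terms are needed.

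Your proposed remainder argument is in fact not just redundant but problematic: you plan to absorb the error by a ``$W^{1,2}(0,T';L^{2})$-bound on $u$'', but under the hypotheses of Theorem~\ref{Th02} (locally Lipschitz $\beta$, not $\beta^{-1}$) that bound is not available---it belongs to Theorem~\ref{Th01}. In the present setting the scheme only controls $\partial_{t}\beta(u)$ in $L^{p'}(0,T';W^{-1,p'}(\Omega))$, not $\partial_{t}u$ in $L^{2}(Q)$, so your detour would leave a genuine gap. Using the subgradient inequality at the correct point $u_{n-1}$ closes it cleanly.
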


Our result can be apply, for instance, to the following doubly nonlinear parabolic equation
with a perturbation:
\begin{equation}
\begin{cases}
\partial _t   | u (x, t ) | ^{q-2} u (x, t )  - \Delta _p  u (x ,t )  
		=F  (  u (x, t ) ) ~~&~~ (x, t )\in Q := \Omega \times (0,T) , \\
u (x, t ) = 0  ~~&~~ (x, t )\in \partial  \Omega \times (0,T) , \\
u (x, 0  ) = u_ 0 (x  ) ~~&~~ x \in  \Omega .  
\end{cases}
\label{doubly-nonlinear}
\end{equation}

In \cite{FO, Ishii, Tsutsumi}, they deal with the case where $p \geq 2 $, $q =2 $ and $F (s) = |s | ^{r-2}s $.
In \cite{YLJ}, they consider the case where $  ( p-1) / (q -1) \geq 1 $ and $F (s) = |s | ^{r-2}s $
and in  \cite{KMN, MZC} consider the case where $ 2 \leq  p < d  $ and $F (s) = |s | ^{r-2}s $.
In \cite{LL}, they discuss the blow-up of the solution for 
the case where $  ( p-1) / (q -1) > 1 $ and $F (s) = s  ^{r} \log s  $.
To the best of our knowledge,
 the solvability for the case where \eqref{doubly-nonlinear} is classified into the singular parabolic equation, 
i.e. $p < 2 $ and  $q >2$ has been still open.  
By our main theorem, we can solve \eqref{doubly-nonlinear}
for every exponent $ p , q > 1$ and general nonlinearity $F $.  
\begin{Co}
Let $ p, q > 1$ and $F : \R \to \R $ be a continuous function. 
Then for every $u _ 0 \in L ^{\infty } (\Omega  )$, 
there exists $ T' \in ( 0 , T ] $ such that 
\eqref{doubly-nonlinear} has at least one solution  which satisfies if $q < 2 $,  
\begin{align*}
u &\in L^{\infty } (0,T' ; W^{1, p } _ 0  (\Omega )) \cap 
		L^{\infty } (0,T' ; L ^{\infty }  (\Omega ) )  \cap  W^{1, 2 }   (0,T'  ; L^2   (\Omega )) ,\\
|u| ^{q-2 } u  &\in  W^{1, p'  }   (0,T'  ; W ^{-1, p' } (\Omega )),
\end{align*}
and if $q \geq 2 $, 
\begin{align*}
u &\in L^{p } (0,T' ; W^{1, p } _ 0  (\Omega )) \cap 
		L^{\infty } (0,T' ; L ^{\infty }  (\Omega ) ) , \\
|u| ^{q-2 } u  &\in   W^{1, p'  }   (0,T'  ; W ^{-1, p' } (\Omega )).
\end{align*}
\end{Co}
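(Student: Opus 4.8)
The plan is to derive the corollary from Theorems~\ref{Th01}, \ref{Th02} by identifying the concrete data of \eqref{doubly-nonlinear} with the abstract data of \eqref{P}. Set $\alpha(x,z) = |z|^{p-2}z$, so $a(x,z) = \tfrac1p|z|^p$ satisfies (H.$\alpha$) with the same exponent $p$; this is exactly the $p$-Laplacian example recalled after the hypotheses. For the time-derivative nonlinearity, set $\beta(s) = |s|^{q-2}s$ with potential $j(s) = \tfrac1q|s|^q$, which is $C^1$-convex with $\beta(0)=0$, so (H.$\beta$) holds, and $D(\beta)=\R$. Finally take $F$ as given and $f\equiv 0 \in L^\infty(Q)$. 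Since $u_0\in L^\infty(\Omega)$ and $\beta$ maps $L^\infty$ into $L^\infty$, we automatically have $\beta(u_0)\in L^\infty(\Omega)$, so the initial-data hypotheses of the theorems are met without extra assumptions on $u_0$.

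Next I would split into the two regimes. When $q\ge 2$, the map $\beta(s)=|s|^{q-2}s$ is locally Lipschitz continuous on $\R$ (its derivative $(q-1)|s|^{q-2}$ is locally bounded), so the hypotheses of Theorem~\ref{Th02} are satisfied and we obtain a solution with exactly the regularity stated there, namely $u\in L^p(0,T';W^{1,p}_0(\Omega))\cap L^\infty(0,T';L^\infty(\Omega))$ and $\beta(u)=|u|^{q-2}u\in L^\infty(0,T';L^\infty(\Omega))\cap W^{1,p'}(0,T';W^{-1,p'}(\Omega))$; dropping the redundant $L^\infty$-in-time membership of $\beta(u)$ from the displayed conclusion gives precisely the second alternative of the corollary. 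When $1<q<2$, it is instead $\beta^{-1}(s)=|s|^{(2-q)/(q-1)}\,\mathrm{sgn}(s)$ that is locally Lipschitz continuous (here $(2-q)/(q-1)>0$), so Theorem~\ref{Th01} applies and yields $u\in L^\infty(0,T';W^{1,p}_0(\Omega))\cap L^\infty(0,T';L^\infty(\Omega))\cap W^{1,2}(0,T';L^2(\Omega))$ together with $\beta(u)\in W^{1,p'}(0,T';W^{-1,p'}(\Omega))$, which is the first alternative.

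The only genuinely substantive point — and the one I would treat as the main obstacle, though it is really just bookkeeping — is checking the exact Lipschitz statements for $|s|^{q-2}s$ and its inverse near $s=0$ in each regime, since that is where the singular/degenerate dichotomy lives: for $q\ge2$ the forward map degenerates (flat at $0$) but stays Lipschitz, whereas for $q<2$ it is the inverse that degenerates while the forward map blows up in slope at $0$, so one must invoke Theorem~\ref{Th01} rather than Theorem~\ref{Th02}. Everything else is a direct transcription: the regularity assertions of the corollary are obtained verbatim from the corresponding theorem, with $\beta(u)$ rewritten as $|u|^{q-2}u$ and the $f$-dependent terms absent because $f\equiv0$. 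I would close by remarking that this covers all $p,q>1$ and arbitrary continuous $F$, in particular the previously open singular case $p<2$, $q>2$ mentioned above.
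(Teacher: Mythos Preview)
Your overall strategy is exactly the intended one: the corollary is a direct specialization of Theorems~\ref{Th01} and \ref{Th02} to $\alpha(x,z)=|z|^{p-2}z$ and $\beta(s)=|s|^{q-2}s$, splitting on whether $\beta$ or $\beta^{-1}$ is locally Lipschitz. Two points, however, need correction before the argument goes through.

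First, the right-hand side of \eqref{P} is $F(\beta(u))$, not $F(u)$, so ``take $F$ as given'' does not produce \eqref{doubly-nonlinear}: you would end up solving $\partial_t|u|^{q-2}u-\Delta_p u=F(|u|^{q-2}u)$. The fix is to apply the theorems with the continuous function $\widetilde F:=F\circ\beta^{-1}$, which is legitimate because $\beta(s)=|s|^{q-2}s$ is a homeomorphism of $\R$ for every $q>1$; then $\widetilde F(\beta(u))=F(u)$ and \eqref{doubly-nonlinear} is recovered.

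Second, your formula $\beta^{-1}(s)=|s|^{(2-q)/(q-1)}\operatorname{sgn}(s)$ is wrong, and the accompanying justification ``$(2-q)/(q-1)>0$'' is insufficient for local Lipschitz continuity (indeed for $3/2<q<2$ this exponent lies in $(0,1)$ and $|s|^{\alpha}\operatorname{sgn}(s)$ with $0<\alpha<1$ is \emph{not} Lipschitz at $0$). The correct inverse is $\beta^{-1}(s)=|s|^{1/(q-1)}\operatorname{sgn}(s)$ (equivalently $|s|^{(2-q)/(q-1)}\,s$), and the relevant fact is that $1/(q-1)\ge 1$ for $1<q\le 2$, so that $\beta^{-1}$ is $C^1$ and hence locally Lipschitz in that range. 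With these two adjustments your proof is complete and matches the paper's intended derivation.
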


\section{Preliminary}

Henceforth, we shall use the following notations  
in order to denote the standard Lebesgue and Sobolev space, respectively:
\begin{align*}
& \DS L ^r (\Omega ) := \LD  v  : \Omega  \to \R   ; 
~
\begin{matrix}
v  \text{ is Lebesgue measurable and }  \\[2mm]
\DS 
		\| v \| _{L ^r (\Omega ) } := 
		\LC  \int_{\Omega }   | v (x) | ^r   dx  \RC ^{1/r } < \infty .
\end{matrix}
 \RD ~~~~~~r \in [1 , \infty ) , \\[2mm]
&\DS L^{\infty } (\Omega ) := \LD  v  
 : \Omega   \to \R  ; 
~
\begin{matrix}
 v  \text{ is Lebesgue measurable and  }  \\[2mm]
\DS  \| v \| _{L^{\infty } (\Omega  ) } :=  \esssup _{  x \in \Omega  }  | v (x) |   < \infty .
\end{matrix}
 \RD  , \\[2mm]
 &\DS W^{1, r } (  \Omega ) := \LD  v  \in L^r ( \Omega  );
			~~ \nabla  v \in L^r (\Omega  )  ~~\forall i =1 , \ldots, N  \RD ,~~~~~ r \in [1 , \infty ] ,
\end{align*}
where $\nabla v $ is the partial derivatives of $v$ in the distributional sense.
Let $ C ^{\infty } _{0} (\Omega ) $ stand for the set of infinitely and continuously differentiable functions $v : \Omega \to \R $
with compact support in $\Omega $. 
Then 
$W^{1, r } _ 0 (  \Omega ) $ is defined by the completion of $C ^{\infty } _ 0 (\Omega )$ 
in $W ^{1, r } (\Omega )$ with norm $ \| v  \| _{W ^{1 , r }} := \| v \| _{ L ^{r } } + \|  \nabla v \| _{L ^r }$. 
Moreover, 
$W^{- 1, r '  }  (  \Omega ) $ is the dual space of 
$W^{1, r } _ 0 (  \Omega ) $ and $ \LA \cdot , \cdot \RA  _{W ^{1,p }}$ denotes 
the duality pairing between 
$W^{1, r } _ 0 (  \Omega ) $  and $W^{- 1, r '  }  (  \Omega ) $,
where $r ' = r / (r -1 )$ is the H\"{o}lder conjugate exponent of $r \in (1, \infty )$. 
The norm of $W^{- 1, r '  }  (  \Omega ) $ is  defined as 
\begin{equation*}
\| v \| _{W^{- 1, r '  }  (  \Omega ) } := \sup _{
\phi \in W ^{1, p } _0   (\Omega ) , ~~ \| \phi \| _{W ^{1, p }   (\Omega )} =1
 }
 \LA v , \phi  \RA _{W ^{1,p }} . 
\end{equation*}

For a Lebesgue measurable function  $ v : \Omega \times (0, T) \to \R $, we define the Bochner space by 
\begin{align*}
& \DS L ^r (0 , T ; X  ) := \LD  v  : Q \to \R   ; 
~
\begin{matrix}
v  \text{ is Lebesgue measurable and }  \\[2mm]
\DS 
		\| v \| _{L ^r ( 0, T; X  ) } := 
		\LC  \int_{0}^{T}   \| v (t ) \| ^r  _X   dt  \RC ^{1/r } < \infty .
\end{matrix}
 \RD  ~r \in [1 , \infty )  \\
&
 \DS L ^\infty  (0 , T ; X  ) := \LD  v  : Q \to \R   ; 
~
\begin{matrix}
v  \text{ is Lebesgue measurable and }  \\[2mm]
\DS 
		\| v \| _{L ^\infty  ( 0, T; X  ) } := 
		  \esssup _{t \in (0,T )}  \| v (t ) \|   _X  < \infty .
\end{matrix}
 \RD , 
\end{align*}
where $X $ is the Lebesgue or Sobolev space given above and $ Q := \Omega \times (0,T )$. 
When $X = L ^r (\Omega )$, we shall abbreviate $L ^r (0,T ; L ^r (\Omega )) $ to $ L ^r (Q )$. 
Moreover, 
we define $ W ^{1, q } (0,T ; X )$ by 
\begin{equation*}
 \DS W ^{1, q } (0 , T ; X  ) := \LD  v  : Q \to \R   ; 
~
\begin{matrix}
v  \text{ is time differentiable in the distributional sense }  \\[2mm]
\DS 
	\text{ and }  	\| v \| _{ W ^{1, q } ( 0, T; X  ) }   < \infty .
\end{matrix}
 \RD , 
\end{equation*}
where $\| v \| _{ W ^{1, q } ( 0, T; X  ) } := 
		\| v  \|   _ { L ^q ( 0 ,T ; X ) } + \| \partial _ t v  \|   _ { L ^q ( 0 ,T ; X ) }    $
		and $\partial _t v $ is the time derivative of $v$ in the distributional sense.

For later use, we here state some basic properties of convex functions and its subdifferential
(see e.g., \cite{Bar,PUni,Bre}). 
In general setting, let $X $ be a Banach space and $\phi : X \to ( - \infty , + \infty ] $ be a real valued function 
which possibly takes $\phi  (u ) = + \infty $ with some $u \in  X $. 
We call the set $D(\phi ) := \{ u \in X  ; \phi  (u ) < + \infty \}$ the effective domain of $\phi $. 
When $\phi  $ is a convex and lower semi-continuous function with $D(\phi ) \neq \varnothing $,
we can define the subgradient of $\phi  $ at $u _ 0   \in D(\phi  )$ by 
\begin{equation*}
\partial \phi  (u _ 0 ) := \{ \eta  \in X ^{\ast} ; ~ \LA \eta , v - u _ 0  \RA _X 
 \leq  \phi (v  ) - \phi  (u _ 0 ) ~~\forall  v \in X \} , 
\end{equation*}
where $X ^{\ast}$ is the dual space of $X$ and $\LA \cdot , \cdot \RA _{X}$
is the duality pairing between $X$ and  $X ^{\ast}$. 
The subdifferential operator $\partial \phi $ from $X $ into $2 ^{X ^{\ast} }$ (the power set of $X ^{\ast} $)
is the mapping 
$ \partial \phi  : u  \mapsto \partial \phi (u  )$. 
Since there might be multiple elements $\eta  $ satisfying $\eta  \in \partial \phi (u_ 0 ) $ in general,  
then the subdifferential operator $ \partial \phi  $ possibly becomes a set-valued mapping.
By the definition of the subgradient, it is obvious that 
$0 \in \partial \phi  (u )$ if and only if $\phi  $ attains its  minimum at  $u  $.

It is well known that the subdifferential operator becomes maximal monotone. 
Here the set-valued operator $A  : X  \to 2 ^{X ^{\ast }}$ is said to be monotone 
if 
\begin{equation}
\LA \eta _1 - \eta _2 , u _1 - u_2 \RA _{X} \geq 0 ~~~\forall u_1 ,u_2 \in D(A ),
\forall \eta _1 \in A ( u _ 1 ) , \forall \eta  _2 \in A (u_ 2 )  
\label{Pre000}
\end{equation}
holds ($D(A ) \subset X $ is the domain of $A $)
and 
said to be maximal monotone 
if $R ( I + A ) = \R $ ($I $ is the duality mapping from $X $ to $X ^{\ast} $ and $R $ is the range of operator). 
Moreover,  maximal monotonicity of $ A $ is equivalent to the fact that
there is no monotonic expansion operator which contains $ A $ properly.

Let $j : \R \to ( - \infty , + \infty ] $ be a real valued function appears in (H.$\beta $). 
When $j $ is differentiable at $s _ 0 $ in usual sense, 
we can see that the usual derivative $ j'  (s _ 0 )$ coincides with the subgradient $\partial j  (s_ 0 )$. 
Hence by the assumption (H.$\beta $), $\beta (s)  = \partial j (s)$ holds in this paper
with $X = X ^{\ast} = \R $.
Since $\beta (0 ) = 0 $, $ j $ attains its minimum at $s = 0 $ and 
we can assume that $j (0 ) = 0 $ and $j \geq 0 $ without loss of generality. 
Here $\beta $ satisfies \eqref{Pre000}
with $X = X ^{\ast} = \R $, which implies that $\beta $ is a non-decreasing function.

Define $J : L ^p (Q) \to ( - \infty , + \infty ] $ by 
\begin{equation*}
J (u ) := \begin{cases}
~~ \DS \int_{0}^{T} \int_{\Omega } j (u (x,t )) dx dt ~~&~~\text{if }  u \in L ^p (Q) \text{ and } j (u) \in L ^1 (Q) , \\
~~ + \infty ~~& ~~ \text{otherwise.}
\end{cases}
\end{equation*}
Then this functional is convex and lower semi-continuous on $ L^q  (Q )$ 
and its subdifferential satisfies $\partial J (u ( x,t ) ) = \partial j  ( u ( x ,t )  ) = \beta (u (x,t ))$
for a.e. $(x,t )\in Q$, so $\beta $ can be regarded as a maximal monotone operator
from $L ^p (Q) $ onto $L ^{p'} (Q)$, where $p' = p /(p-1 )$ is the H\"{o}lder conjugate exponent of $p \in (1, \infty )$
(see, e.g., Proposition 3 of \cite{PUni}). Then we can see that the following holds
(see, e.g., Proposition 2.1 of \cite{Bar}): 
\begin{Le}
Let
 $\{ u_ n \} _{ n \in \N }$ weakly converge to $u$ in $L ^p (Q)$ 
and 
 $\{ \beta ( u_ n)  \} _{ n \in \N }$ strongly converge to $\xi $ in $L ^{p' } (Q)$. 
Then $\xi = \beta (u )$ for a.e. $\Omega \times (0,T )$.  
\label{Demi-beta}
\end{Le}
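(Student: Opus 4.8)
The plan is to identify the weak/strong limit pair $(u,\xi)$ as an element of the graph of the maximal monotone operator $\partial J$, using the monotonicity inequality together with a single passage to the limit and the maximality of $\partial J$. Recall from the discussion preceding the lemma that $J$ is convex and lower semicontinuous on $L^p(Q)$, that for $u\in D(\partial J)$ the subgradient $\partial J(u)$ is the function $(x,t)\mapsto \beta(u(x,t))\in L^{p'}(Q)$, and that, being the subdifferential of a proper convex lower semicontinuous functional, $\partial J$ is maximal monotone; in particular, by the characterization recalled above, $\partial J$ admits no proper monotone extension.

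First I would fix an arbitrary element $v\in D(\partial J)$ and put $\eta:=\beta(v)\in L^{p'}(Q)$. Since $\beta(u_n)=\partial J(u_n)$ a.e.\ in $Q$, monotonicity of $\partial J$ gives, for every $n\in\N$,
\[
\int_0^T\!\!\int_\Omega \bigl(\beta(u_n)-\eta\bigr)\,(u_n-v)\,dx\,dt\ \ge\ 0 .
\]
Next I would pass to the limit $n\to\infty$ after expanding the product. The terms $\int_0^T\!\!\int_\Omega \eta\,u_n\,dx\,dt$ and $\int_0^T\!\!\int_\Omega \beta(u_n)\,v\,dx\,dt$ converge to $\int_0^T\!\!\int_\Omega \eta\,u\,dx\,dt$ and $\int_0^T\!\!\int_\Omega \xi\,v\,dx\,dt$ respectively — the first because $u_n\rightharpoonup u$ in $L^p(Q)$ and $\eta\in L^{p'}(Q)$, the second because $\beta(u_n)\to\xi$ strongly in $L^{p'}(Q)$ and $v\in L^p(Q)$ — while $\int_0^T\!\!\int_\Omega \eta\,v\,dx\,dt$ is constant. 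The only nontrivial term is $\int_0^T\!\!\int_\Omega \beta(u_n)\,u_n\,dx\,dt$: writing $\beta(u_n)u_n=(\beta(u_n)-\xi)u_n+\xi u_n$ and using that $\{u_n\}$ is bounded in $L^p(Q)$ (being weakly convergent) together with $\|\beta(u_n)-\xi\|_{L^{p'}(Q)}\to0$, one gets $\int_0^T\!\!\int_\Omega (\beta(u_n)-\xi)u_n\,dx\,dt\to0$, whereas $\int_0^T\!\!\int_\Omega \xi\,u_n\,dx\,dt\to\int_0^T\!\!\int_\Omega \xi\,u\,dx\,dt$ by weak convergence; hence $\int_0^T\!\!\int_\Omega \beta(u_n)u_n\,dx\,dt\to\int_0^T\!\!\int_\Omega \xi\,u\,dx\,dt$. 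Collecting these limits and keeping the sign of each term of the sequence yields
\[
\int_0^T\!\!\int_\Omega \bigl(\xi-\eta\bigr)\,(u-v)\,dx\,dt\ \ge\ 0\qquad\text{for every }(v,\eta)\in\partial J .
\]

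Finally I would invoke maximality. The pair $(u,\xi)$ lies in $L^p(Q)\times L^{p'}(Q)$, since $u$ is a weak $L^p$-limit and $\xi$ a strong $L^{p'}$-limit; the inequality just obtained shows that adjoining $(u,\xi)$ to the graph of $\partial J$ keeps the graph monotone, so by maximal monotonicity $(u,\xi)\in\partial J$, that is, $\xi=\partial J(u)=\beta(u)$ for a.e.\ $(x,t)\in Q$, as claimed. The one genuinely delicate point of the argument is the passage to the limit in the bilinear term $\int_0^T\!\!\int_\Omega \beta(u_n)u_n\,dx\,dt$, which pairs the sequence $u_n$ — only weakly convergent — with a factor that is itself merely convergent; it is precisely the strong $L^{p'}$-convergence of $\beta(u_n)$ hypothesized in the lemma that rescues this step, and the rest is routine.
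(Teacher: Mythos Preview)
Your argument is correct and is precisely the standard demiclosedness proof for maximal monotone operators: pass to the limit in the monotonicity inequality using the weak/strong pairing, then invoke maximality. The paper does not give its own proof but simply refers to Proposition~2.1 of \cite{Bar}, which is exactly this result, so your approach coincides with what the paper relies on.
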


We next define the Legendre-Fenchel transform of $j $ by 
\begin{equation*}
j ^\ast ( \sigma ) 
= \sup _{s \in \R } \LC  \sigma s - j (s)\RC . 
\end{equation*}
It is well known that if $j $
is a convex lower semi-continuous function with $D (j )\neq \varnothing $,
then $j ^{\ast }$ 
is also convex lower semi-continuous and  satisfy $D (j ^{\ast} )\neq \varnothing $.
 Moreover, the subdifferential of $j ^{\ast }$ coincides with the inverse operator of $\beta $:
 \begin{equation*}
\partial j ^{\ast }  (\sigma  ) = \beta ^{-1} (\sigma ) 
:= 
\{ s \in \R ; ~ \sigma = \beta (s ) \} . 
\end{equation*}
Then by  $s \in \beta ^{-1}( \beta (s )) = \partial j ^{\ast} (\beta (s ))$, 
the definition of subgradient yields 
\begin{equation}
 ( \beta ( \sigma ) - \beta ( s )) s \leq j ^{\ast } ( \beta (\sigma )) - j ^{\ast }  ( \beta (s) ) . 
\label{Pre001} 
\end{equation}
Since $\beta (0 ) = 0 $, i.e., $0 \in \beta ^{-1} (0 )$,  $ j^{\ast} $ attains its minimum at $s = 0 $ and 
we can assume that $j ^{\ast} (0 ) = 0 $ and $j ^{\ast }\geq 0 $ without loss of generality.

We check the following lemma, which is a variant of Lemma 1.5 in Alt-Luckhaus \cite{AL}.
\begin{Le}
Assume $u \in L^p (0,T ; W ^{1,p } (\Omega ))$, 
$ \xi \in L ^{\infty } (Q) \cap  W  ^{1, p'} (0,T ; W ^{-1, p } (\Omega ))$, and 
$j ^{\ast } (\xi ) \in L^1 (Q)$.
If $\xi (x, t ) = \beta (u (x ,t )) $ for a.e. $ (x ,t ) \in Q$, 
then
\begin{equation}
\int_{\Omega } j ^{\ast} ( \xi ( t) )dx - \int_{\Omega } j ^{\ast} ( \beta (u _0 ) ) dx
= \int_{0}^{t} \LA \partial _t \xi (s) , u (s) \RA _{W ^{1,p } } dt  
\label{Lem-AL}  
\end{equation}
holds  for almost any $t \in (0 , T ]  $.
\label{IBP} 
\end{Le}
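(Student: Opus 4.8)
\textit{Proof strategy.} The plan is to establish the chain rule \eqref{Lem-AL} by a time–mollification argument combined with the subgradient (convexity) inequality for $j^{\ast }$, in the spirit of Lemma~1.5 of \cite{AL}. First I would fix the pointwise objects. Since $j^{\ast }(\xi )\in L^1(Q)$, Fubini's theorem gives $j^{\ast }(\xi (t))\in L^1(\Omega )$ for a.e.\ $t$, so $m(t):=\int_{\Omega }j^{\ast }(\xi (x,t))\,dx$ defines a function in $L^1(0,T)$. Since $\xi \in W^{1,p'}(0,T;W^{-1,p}(\Omega ))$, $\xi $ has a representative that is absolutely continuous as a $W^{-1,p}(\Omega )$-valued map with $\xi (t)-\xi (s)=\int_s^t\partial _t\xi (\tau )\,d\tau $ in $W^{-1,p}(\Omega )$; because $\xi \in L^{\infty }(Q)$, a weak-$\ast $ compactness argument along times $s_n\to t$ (identifying the weak-$\ast $ limit with $\xi (t)$ in $W^{-1,p}(\Omega )$) shows this representative also satisfies $\xi (t)\in L^{\infty }(\Omega )$ with $\|\xi (t)\|_{L^{\infty }(\Omega )}\le \|\xi \|_{L^{\infty }(Q)}$ for every $t\in [0,T]$. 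In particular $\xi (0)=\beta (u_0)$ is a well-defined element of $L^{\infty }(\Omega )$.

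For the interior identity, fix $h>0$ and $0<t_1<t_2<T$ with $t_2+h<T$. Because $\xi (x,s)=\beta (u(x,s))$, i.e.\ $u(x,s)\in \beta ^{-1}(\xi (x,s))=\partial j^{\ast }(\xi (x,s))$ for a.e.\ $(x,s)$, the subgradient inequality for the convex function $j^{\ast }$ yields, for a.e.\ $(x,s)$,
\begin{equation*}
u(x,s)\bigl(\xi (x,s+h)-\xi (x,s)\bigr)\le j^{\ast }(\xi (x,s+h))-j^{\ast }(\xi (x,s))\le u(x,s+h)\bigl(\xi (x,s+h)-\xi (x,s)\bigr).
\end{equation*}
Integrating over $\Omega \times (t_1,t_2)$ and dividing by $h$, the middle term telescopes to $\frac1h\int_{t_2}^{t_2+h}m-\frac1h\int_{t_1}^{t_1+h}m$, which converges to $m(t_2)-m(t_1)$ at every Lebesgue point $t_1,t_2$ of $m$ as $h\to 0^+$. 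For the two outer terms I would rewrite $\xi (s+h)-\xi (s)=\int_s^{s+h}\partial _t\xi (\tau )\,d\tau $ and use that $\xi (s+h)-\xi (s)\in L^{\infty }(\Omega )\subset L^{p'}(\Omega )$ pairs with $u(s)\in W^{1,p}(\Omega )$ both as an integral over $\Omega $ and via $\langle \cdot ,\cdot \rangle _{W^{1,p}}$. The key fact is that the forward Steklov average $\frac1h\int_{\,\cdot\,}^{\,\cdot+h}\partial _t\xi (\tau )\,d\tau $ converges to $\partial _t\xi $ in $L^{p'}(0,T;W^{-1,p}(\Omega ))$ as $h\to 0$; pairing with $u\in L^p(0,T;W^{1,p}(\Omega ))$ (for the right-hand term after the substitution $s\mapsto s+h$, which also moves the limits of integration to $t_1+h,t_2+h$) shows that both outer terms tend to $\int_{t_1}^{t_2}\langle \partial _t\xi (s),u(s)\rangle _{W^{1,p}}\,ds$. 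By the squeeze theorem, $m(t_2)-m(t_1)=\int_{t_1}^{t_2}\langle \partial _t\xi (s),u(s)\rangle _{W^{1,p}}\,ds$ for a.e.\ $0<t_1<t_2<T$; since $s\mapsto \langle \partial _t\xi (s),u(s)\rangle _{W^{1,p}}\in L^1(0,T)$, the function $m$ coincides a.e.\ with an absolutely continuous function $G$ on $[0,T]$ whose derivative is $\langle \partial _t\xi (s),u(s)\rangle _{W^{1,p}}$.

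It remains to identify $G(0)$ with $\int_{\Omega }j^{\ast }(\beta (u_0))\,dx$. For the lower bound on $G(0)$: along Lebesgue points $s_n\downarrow 0$ of $m$ we have $m(s_n)=G(s_n)\to G(0)$, and boundedness of $\{\xi (s_n)\}$ in $L^{\infty }(\Omega )$ together with $\xi (s_n)\to \xi (0)=\beta (u_0)$ in $W^{-1,p}(\Omega )$ forces $\xi (s_n)\rightharpoonup \beta (u_0)$ in $L^{p'}(\Omega )$; weak lower semicontinuity of the convex functional $v\mapsto \int_{\Omega }j^{\ast }(v)\,dx$ (recall $j^{\ast }\ge 0$) then gives $\int_{\Omega }j^{\ast }(\beta (u_0))\,dx\le G(0)<\infty $. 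For the reverse inequality, now that $j^{\ast }(\beta (u_0))\in L^1(\Omega )$ and $u_0\in \partial j^{\ast }(\beta (u_0))$ a.e., the subgradient inequality gives $j^{\ast }(\xi (x,s))-j^{\ast }(\beta (u_0(x)))\ge u_0(x)\bigl(\xi (x,s)-\beta (u_0(x))\bigr)$; integrating over $\Omega $, averaging over $s\in (0,\delta )$, and letting $\delta \to 0$, the right-hand side tends to $0$ (since $\xi (s)\rightharpoonup \beta (u_0)$ weak-$\ast $ in $L^{\infty }(\Omega )$ and $u_0\in L^1(\Omega )$), while $\frac1\delta \int_0^\delta m=\frac1\delta \int_0^\delta G\to G(0)$, so $G(0)\ge \int_{\Omega }j^{\ast }(\beta (u_0))\,dx$. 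Hence $G(0)=\int_{\Omega }j^{\ast }(\beta (u_0))\,dx$, and integrating $G'$ from $0$ to $t$ yields \eqref{Lem-AL} at every Lebesgue point $t$ of $m$, i.e.\ for a.e.\ $t\in (0,T]$.

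I expect the main obstacle to be the limit $h\to 0$ in the integrated convexity inequality — specifically, showing that both outer terms $\frac1h\int_{t_1}^{t_2}\int_{\Omega }u(s)\bigl(\xi (s+h)-\xi (s)\bigr)\,dx\,ds$ and its shifted counterpart converge to the same quantity $\int_{t_1}^{t_2}\langle \partial _t\xi (s),u(s)\rangle _{W^{1,p}}\,ds$; this hinges on the $L^{p'}(0,T;W^{-1,p})$-continuity of Steklov averaging of $\partial _t\xi $ and on the consistency of the $L^{p'}(\Omega )$-integral pairing with the $W^{-1,p}$–$W^{1,p}_0$ duality. The secondary difficulty is the behaviour at the initial time, which is handled above by pairing weak lower semicontinuity with a one-sided subgradient estimate.
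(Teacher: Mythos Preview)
Your interior argument on $(t_1,t_2)$ is correct and is exactly the Alt--Luckhaus mechanism the paper also uses: sandwich the increment of $j^{\ast}(\xi)$ between two subgradient inequalities, telescope, and pass to the limit via Steklov averaging of $\partial_t\xi$. The paper carries out the same computation, only organized as one-sided difference quotients from the left and from the right.

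The gap is at $t=0$. Both of your arguments for identifying $G(0)$ prove the \emph{same} inequality $G(0)\ge\int_\Omega j^{\ast}(\beta(u_0))\,dx$. The lower-semicontinuity step gives $\int_\Omega j^{\ast}(\beta(u_0))\le\liminf m(s_n)=G(0)$. In the paragraph you label ``reverse inequality'' you apply the subgradient inequality at the point $\beta(u_0)$ with subgradient $u_0$, which reads $j^{\ast}(\xi(s))-j^{\ast}(\beta(u_0))\ge u_0\,(\xi(s)-\beta(u_0))$; integrating and averaging this again gives $G(0)\ge\int_\Omega j^{\ast}(\beta(u_0))$, not the opposite. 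You never establish $G(0)\le\int_\Omega j^{\ast}(\beta(u_0))$, so the identification of the constant --- and hence \eqref{Lem-AL} at the initial time --- is incomplete.

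There are two natural repairs. The paper's device is to \emph{extend} $u(\cdot,t)=u_0$ for $t<0$ and run the difference-quotient argument on $(0,t)$ rather than on an interior interval; the telescoping sum then produces $\int_\Omega j^{\ast}(\beta(u_0))\,dx$ directly, and the two subgradient inequalities (at $u(s)$ and at $u(s-h)$) bound the limit from above and below without any separate endpoint analysis. Within your framework, the fix is to use the subgradient inequality at $\xi(s)$ (subgradient $u(s)$) instead of at $\beta(u_0)$: this yields
\[
m(s)\le \int_\Omega j^{\ast}(\beta(u_0))\,dx+\bigl\langle \xi(s)-\xi(0),\,u(s)\bigr\rangle_{W^{1,p}},
\]
and after averaging over $(0,\delta)$ and writing $\xi(s)-\xi(0)=\int_0^s\partial_t\xi$, H\"older's inequality with $u\in L^p(0,T;W^{1,p})$ and $\partial_t\xi\in L^{p'}(0,T;W^{-1,p'})$ forces the remainder to vanish as $\delta\to 0$, giving the missing inequality $G(0)\le\int_\Omega j^{\ast}(\beta(u_0))\,dx$.
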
 

\begin{proof}
We refer to the proof of Lemma 1.5 in \cite{AL}. 
Let $ u (\cdot  , t ) = u_ 0 $ for any $t < 0 $. 
Then by \eqref{Pre001}, for arbitrary $h > 0 $
\begin{equation}
\begin{split}
&j ^{\ast } (\beta  (u (x, s ))) - j ^{\ast } (\beta  (u (x, s -h )))  \\
&\hspace{3cm} \leq 
		 ( \beta  (u (x, s )) - \beta  (u (x, s -h  ))) u (x ,s ) , \\
&j ^{\ast } (\beta  (u (x, s ))) - j ^{\ast } (\beta  (u (x, s -h )))  \\
&\hspace{3cm} 
		\geq ( \beta  (u (x, s )) - \beta  (u (x, s -h  ))) u (x ,s - h  ) .
\end{split}
\label{Memo007-1} 
\end{equation}
Integrating the first inequality of \eqref{Memo007-1} over $\Omega \times (0,t )$, 
we have 
\begin{align*}
&\int_{0}^{t} \int _{\Omega }  j ^{\ast } (\beta  (u (x, s ))) dxds 
	-\int_{0}^{t} \int _{\Omega }    j ^{\ast } (\beta  (u (x, s -h ))) dx ds \\
&	=
\int_{t -h }^{t} \int _{\Omega }  j ^{\ast } (\beta  (u (x, s ))) dxds 
	-h   \int _{\Omega }  j ^{\ast } (\beta  (u_ 0  (x ))) dx , 
\end{align*}
namely, 
\begin{align*}
&\frac{1}{h} \int_{t -h }^{t} \int _{\Omega }  j ^{\ast } (\beta  (u (x, s ))) dxds 
	- \int _{\Omega }   j ^{\ast } (\beta  (u_ 0  (x ))) dx \\
&\leq \frac{1}{h} 
\int_{0}^{t} \int _{\Omega }  
	( \beta  (u (x, s )) - \beta  (u (x, s -h  ))) u (x ,s   )  dxds .
\end{align*}
Then we can replace $\beta  (u )$ with $\xi $ and obtain 
\begin{align*}
&\frac{1}{h} \int_{t -h }^{t} \int _{\Omega }  j ^{\ast } (\xi (x, s ))) dxds 
	-  \int _{\Omega }  j ^{\ast } (\beta  (u_ 0  (x ))) dx \\
& \leq \frac{1}{h} 
\int_{0}^{t} \int _{\Omega }  
	( \xi  (x, s ) - \xi (x, s -h  )) u (x ,s   )  dxds 
=
- \int_{0}^{t} 
	\LA  \frac{  \xi ( s -h  ) - \xi  ( s )  }{h }  , u ( s   ) \RA  _{W ^{1,p }}  ds 
\end{align*}
Since $\xi \in W ^{1 ,p ' } (0,T ; W ^{-1 , p'} (\Omega ))$, $\xi $ is differentiable for a.e. $t \in (0,T) $ and then 
\begin{align*}
&\limsup _{ h\to 0} \frac{1}{h} \int_{t -h }^{t} \int _{\Omega }  j ^{\ast } (\xi (x, s ))) dxds 
	- \int _{\Omega }   j ^{\ast } (\beta  (u_ 0  (x ))) dx \\
&\leq 
-\liminf _{h\to 0} \int_{0}^{t} 
	\LA  \frac{  \xi ( s -h  ) - \xi  ( s )  }{h }  , u ( s   ) \RA  _{W ^{1,p }}  ds  \\
&\leq 
-\int_{0}^{t} 
	\LA - \partial _t  \xi ( s   )  , u ( s   ) \RA  _{W ^{1,p }}  ds  
=\int_{0}^{t} 
	\LA  \partial _t  \xi ( s   )  , u ( s   ) \RA  _{W ^{1,p }}  ds . 
\end{align*}
Next integrating the second inequality of \eqref{Memo007-1}
over $\Omega \times (0,t )$, we get 
\begin{align*}
&\frac{1}{h} \int_{t -h }^{t} \int _{\Omega }  j ^{\ast } (\beta  (u (x, s ))) dxds 
	- \int _{\Omega }   j ^{\ast } (\beta  (u_ 0  (x ))) dx \\
& \geq \frac{1}{h} 
\int_{0}^{t} \int _{\Omega }  
	( \beta  (u (x, s )) - \beta  (u (x, s -h  ))) u (x ,s -h  )  dxds  \\
& = \frac{1}{h} 
\int_{-h }^{t-h } \int _{\Omega }  
	( \beta  (u (x, s+h  )) - \beta  (u (x, s  ))) u (x ,s  )  dxds
\end{align*}
By replacing $\beta  (u )$ with $\xi $, we have 
\begin{align*}
&\frac{1}{h} \int_{t -h }^{t} \int _{\Omega }  j ^{\ast } (\xi (x, s )) dxds 
	- \int _{\Omega }   j ^{\ast } (\beta  (u_ 0  (x ))) dx \\
& \geq  \frac{1}{h} 
\int_{-h  }^{t-h } \int _{\Omega }  
	( \xi (x, s+h  ) - \xi (x, s  ) ) u (x ,s  )  dxds  \\
& \geq  
\int_{ -h  }^{t-h } \int _{\Omega }  
	\LA  \frac{\xi (x, s+h  ) - \xi (x, s  )}{h }   ,  u (x ,s  )  \RA _{W ^{1, p }}ds  .
\end{align*}
Hence
\begin{align*}
&\liminf _{ h\to 0} \frac{1}{h} \int_{t -h }^{t} \int _{\Omega }  j ^{\ast } (\xi (x, s ))) dxds 
	- \int _{\Omega }   j ^{\ast } (\beta  (u_ 0  (x ))) dx \\
&\geq 
\int_{0}^{t} 
	\LA  \partial _t  \xi ( s   )  , u ( s   ) \RA  _{W ^{1,p }}  ds  , 
\end{align*}
and then the limit of $\frac{1}{h} \int_{t -h }^{t} \int _{\Omega }  j ^{\ast } (\xi (x, s ))) dxds$ as $h \to + 0 $ exists 
and satisfies 
\begin{equation}
\begin{split}
& \lim _{ h\to 0} \frac{1}{h} \int_{t -h }^{t} \int _{\Omega }  j ^{\ast } (\xi (x, s ))) dxds  \\ 
&=
\int _{\Omega }   j ^{\ast } (\beta  (u_ 0  (x ))) dx
+\int_{0}^{t} 
	\LA  \partial _t  \xi ( s   )  , u ( s   ) \RA  _{W ^{1,p }}  ds  .
\end{split}
\label{Pre003} 
\end{equation}

Similarly, by \eqref{Pre001}
\begin{equation}
\begin{split}
&j ^{\ast } (\beta  (u (x, s +h ))) - j ^{\ast } (\beta  (u (x, s  )))  \\
&\hspace{3cm} \leq 
		 ( \beta  (u (x, s+h )) - \beta  (u (x, s   ))) u (x ,s +h ) , \\
&j ^{\ast } (\beta  (u (x, s+h  ))) - j ^{\ast } (\beta  (u (x, s  ))) \\
&\hspace{3cm} \geq 
		 ( \beta  (u (x, s+h  )) - \beta  (u (x, s   ))) u (x ,s   ) .
\end{split}
\label{Memo007-2} 
\end{equation}
Integrating the first inequality of \eqref{Memo007-2} over $\Omega \times (-h ,t )$, we get 
\begin{align*}
&\int_{-h}^{t} \int _{\Omega }  j ^{\ast } (\beta  (u (x, s+h  ))) dxds 
	-\int_{-h }^{t} \int _{\Omega }    j ^{\ast } (\beta  (u (x, s  ))) dx ds  \\
&	=
\int_{t  }^{t+h } \int _{\Omega }  j ^{\ast } (\beta  (u (x, s ))) dxds 
	-h   j ^{\ast } (\beta  (u_ 0  (x ))) dx
\end{align*}
and 
\begin{align*}
&\frac{1}{h} \int_{t  }^{t+h } \int _{\Omega }  j ^{\ast } ( \xi (x, s )) dxds 
	- \int _{\Omega } j ^{\ast } (\beta  (u_ 0  (x )) dx \\
& \leq 
	\frac{1}{h} \int_{-h   }^{t } \int _{\Omega }  
	( \xi (x, s+h  ) - \xi (x, s   )) u (x ,s + h  )  dxds  \\
& = \frac{1}{h} 
\int_{0 }^{t + h } \int _{\Omega }  
		( \xi (x, s  ) - \xi  (x, s -h  )) u (x ,s  )  dxds  \\
& = 
- \int_{0 }^{t + h }  
		\LA  \frac{  \xi (x, s -h  ) - \xi (x, s  ) }{h} ,  u (x ,s  ) \RA _{W ^{1,p }}  ds  .
\end{align*}
Hence 
\begin{align*}
&\limsup _{h\to 0 } \frac{1}{h} \int_{t  }^{t+h } \int _{\Omega }  j ^{\ast } (\xi (x, s )) dxds 
	-  j ^{\ast } (\beta  (u_ 0  (x ))) dx \\
& \leq 
- \int_{0 }^{t  } 
		\LA  - \partial _t \xi (s    )  ,  u (  s  ) \RA _{W ^{1,p }}  ds  
 \leq 
 \int_{0 }^{t  } 
		\LA  \partial _t \xi (s   )   ,  u (  s  ) \RA _{W ^{1,p }}  ds  .
\end{align*}
By the same argument, we can derive from the second inequality of \eqref{Memo007-2}
\begin{align*}
&\liminf  _{h\to 0 } \frac{1}{h} \int_{t  }^{t+h } \int _{\Omega }  j ^{\ast } (\xi (x, s )) dxds 
	-  j ^{\ast } (\beta  (u_ 0  (x ))) dx \\
& \geq 
 \int_{0 }^{t  } 
		\LA  \partial _t \xi (s   )   ,  u (  s  ) \RA _{W ^{1,p }}  ds  .
\end{align*}
Therefore 
\begin{equation}
\begin{split}
&\lim _{ h\to 0} \frac{1}{h} 
\int_{t }^{t +h } \int _{\Omega }  j ^{\ast } (\xi (x, s ))) dxds  \\
&	=
 j ^{\ast } (\beta  (u_ 0  (x ))) dx
+\int_{0}^{t} 
	\LA  \partial _t  \xi ( s   )  , u ( s   ) \RA  _{W ^{1,p }}  ds .  
\end{split}
\label{Pre004} 
\end{equation}
By \eqref{Pre003} and \eqref{Pre004}, 
$\int_{0 }^{t } \int _{\Omega }  j ^{\ast } (\xi (x, s ))) dxds $
is differentiable for every $t > 0 $ and the assertion holds for every Lebesgue point of 
$t \mapsto \int _{\Omega }  j ^{\ast } (\xi (x, t ))) dx$.
\end{proof}

The main term $- \nabla  \cdot \alpha (\cdot , \nabla u) $ also can be represented by some subdifferential operator 
from $L ^p (0, T  ; W ^{1, p } _ 0 (\Omega ))$ onto  its dual space  $L ^{p'} (0, T  ; W ^{-1, p' } (\Omega ))$.
Define
\begin{equation*}
\varphi ( u ) :=  \int _{\Omega } a (x , \nabla v (x  )) dx ,
\end{equation*}
which satisfies $\varphi ( v ) < + \infty $ for every $v \in  W ^{1, p } _ 0  (\Omega ) $
by \eqref{A01}.
We can see that its subgradient at $ u \in  W ^{1, p } _ 0  (\Omega )  $
coincides with $- \nabla  \cdot \alpha (\cdot , \nabla u) \in  W ^{-1, p' } (\Omega )$.
Namely we have 
\begin{equation}
\begin{split}
\LA - \nabla  \cdot \alpha (\cdot , \nabla u) , v - u  \RA _{W ^{1,p } }
& =\int _{\Omega } \alpha (x , \nabla u (x )) \cdot ( \nabla v (x ) - \nabla  u (x ) ) dx \\
& \leq \int_{\Omega }   a (x , \nabla v (x  )) dx - \int_{\Omega }   a (x , \nabla u (x  )) dx 
\end{split}
\label{Pre002} 
\end{equation}
for every $u , v \in W ^{1,p  } _ 0 (\Omega ) $. 
Similarly, by defining 
\begin{equation*}
\Phi ( u ) := \int_{0}^{T} \int _{\Omega } a (x , \nabla u (x , t )) dx dt  ,
\end{equation*}
on $ L ^p (0, T ; W ^{1, p } _ 0  (\Omega ))$, 
we can show that  its subgradient at $ u \in L ^p (0, T ; W ^{1, p } _ 0  (\Omega )) $
coincides with 
$- \nabla  \cdot \alpha (\cdot , \nabla u) \in L ^{p'} (0, T  ; W ^{-1, p' } (\Omega ))$, 
that is to say, 
$- \nabla  \cdot \alpha (\cdot , \cdot ) : L ^p (0, T ; W ^{1, p } _ 0  (\Omega ))
\to  L ^{p'} (0, T  ; W ^{-1, p' } (\Omega ))$
is a maximal monotone operator. 
Hence we can use the following Lemma (see, e.g., Lemma 1.2 of \cite{BCP}):
\begin{Le}
Let
 $\{ u_ n \} _{ n \in \N }$ weakly converge to $u$ in $L ^p (0 ,T ; W ^{1, p } _ 0 (\Omega ))$ 
and 
 $\{ - \nabla  \cdot \alpha (\cdot , \nabla  u _n ) \} _{ n \in \N }$ weakly converge to $\eta $ in $L ^{p' } (0 ,T ; W ^{- 1, p'  }  (\Omega ))$. 
If 
\begin{equation*}
\limsup _{n \to \infty }\int_{0}^{T}  \LA - \nabla  \cdot \alpha (\cdot , \nabla   u _n  ) - \eta ,  u  _n - u  \RA _{W ^{1, p }} dt \leq 0 ,
\end{equation*}
then $\eta = - \nabla  \cdot \alpha (\cdot , \nabla u) $. 
\label{Demi}
\end{Le}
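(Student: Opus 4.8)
The plan is to deduce the statement from the abstract fact, already recorded above, that $A := - \nabla \cdot \alpha (\cdot , \cdot )$ is a \emph{maximal} monotone operator from $V := L ^p (0,T ; W ^{1,p } _ 0 (\Omega ))$ into its dual $V ^{\ast } = L ^{p'} (0,T ; W ^{-1 , p'} (\Omega ))$, where we abbreviate the duality pairing as $\LA \cdot , \cdot \RA _V := \int _0 ^T \LA \cdot , \cdot \RA _{W ^{1,p }} \, dt$. Writing $A u _n := - \nabla \cdot \alpha (\cdot , \nabla u _n )$ and $A u := - \nabla \cdot \alpha (\cdot , \nabla u )$, the hypotheses read $u _n \rightharpoonup u$ in $V$, $A u _n \rightharpoonup \eta$ in $V ^{\ast }$, and $\limsup _{n \to \infty } \LA A u _n - \eta , u _n - u \RA _V \leq 0$; the goal is to show that $[u , \eta ]$ lies on the graph of $A$, i.e. $\eta = A u$.

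First I would fix an arbitrary pair $[v , w ]$ with $w = A v$ and use monotonicity of $A$, which gives $\LA A u _n - w , u _n - v \RA _V \geq 0$ for every $n$. Then I would expand the left-hand side by inserting $\pm \eta $ and $\pm u$:
\begin{align*}
\LA A u _n - w , u _n - v \RA _V
&= \LA A u _n - \eta , u _n - u \RA _V
+ \LA A u _n - \eta , u - v \RA _V \\
&\quad + \LA \eta - w , u _n - u \RA _V
+ \LA \eta - w , u - v \RA _V .
\end{align*}
The second term tends to $0$ since $A u _n \rightharpoonup \eta$ in $V ^{\ast }$ and $u - v$ is a fixed element of $V$; the third term tends to $0$ since $u _n \rightharpoonup u$ in $V$ and $\eta - w$ is a fixed element of $V ^{\ast }$; the fourth term is independent of $n$. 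Taking $\limsup _{n \to \infty }$, using that the left-hand side is nonnegative for each $n$ and invoking the standing hypothesis on the first term, I obtain $0 \leq \LA \eta - w , u - v \RA _V$.

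Since $[v , w ]$ was an arbitrary point of the graph of $A$, this means that appending $[u , \eta ]$ to the graph of $A$ preserves monotonicity; by maximality of $A$ — which, as recalled above, is exactly the absence of a proper monotone extension — it follows that $[u , \eta ]$ already belongs to the graph of $A$, i.e. $\eta = - \nabla \cdot \alpha (\cdot , \nabla u )$. I do not anticipate a genuine obstacle: the only points needing care are that the two cross terms really vanish (each being the pairing of a weakly convergent sequence against a fixed vector of the predual, respectively dual, which is legitimate since $V$ is reflexive) and that it is \emph{maximal} monotonicity, not mere monotonicity, that licenses the last step. If one wished to bypass the maximality characterization, the same conclusion follows by the Minty-type device of taking $v = u + \lambda z$, $w = A(u + \lambda z )$ for $z \in V$ and $\lambda > 0$, dividing the resulting inequality by $\lambda$, and letting $\lambda \to 0 ^{+}$ using the hemicontinuity of $A$ along segments (which holds because $a(x , \cdot )$ is $C ^1$, so $\alpha (x , \cdot )$ is continuous and $A$ is single-valued and everywhere defined).
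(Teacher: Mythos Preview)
Your argument is correct: it is precisely the standard demiclosedness (Minty) argument for maximal monotone operators, and each step---the four-term splitting, the vanishing of the two cross terms by weak convergence, and the appeal to maximality---goes through as written. The paper itself does not supply a proof of this lemma but merely quotes it from the literature (Lemma~1.2 of \cite{BCP}); what you have written is exactly the classical proof one finds there or in standard references on maximal monotone operators, so there is nothing to compare against and nothing to fix.
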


We here recall Aubin-Lions's compactness theorem for later use 
(see, e.g., Corollary 4 of \cite{Simon} or Theorem II.5.16 of \cite{BF}).
\begin{Le}
Let $ X_0$, $X$, $X_1$ be Banach spaces such that the embedding $ X_ 0 \hookrightarrow X$ is compact  and 
$X \hookrightarrow X_1$ is continuous. 
Define with $1 \leq q, r \leq \infty $
\begin{equation*}
W := \{  w \in L ^q (0, T ; X_ 0 ) ;~ \partial _ t w \in L ^r (0, T ; X_ 1 )   \} . 
\end{equation*}
Then  $W \hookrightarrow L ^q ( 0, T ; X )$ is compact if $q < \infty $ and  
 $W \hookrightarrow C ( [0, T ] ; X )$ is compact if $q = \infty $. 
\label{A-L-Th}
\end{Le}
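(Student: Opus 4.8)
The plan is to reproduce the classical Aubin--Lions--Simon argument. Let $\{ w_ n\} _{n \in \N }$ be an arbitrary bounded sequence in $W$, say $\| w_ n\| _{L ^q (0,T;X_ 0 )} \leq M$ and $\| \partial _t w_ n \| _{L ^r (0,T;X_ 1 )} \leq M$; since a compact embedding is characterised by sequential relative compactness, it suffices to extract a subsequence of $\{ w_ n\}$ converging in $L ^q (0,T;X)$ (resp.\ in $C([0,T];X)$ when $q = \infty $). The two ingredients are Ehrling's interpolation inequality and a uniform bound on the time translations of $w_ n$.

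First I would establish \emph{Ehrling's lemma}: for every $\eta > 0$ there is $C_ \eta > 0$ with $\| v \| _X \leq \eta \| v \| _{X_ 0} + C_ \eta \| v \| _{X_ 1}$ for all $v \in X_ 0$. This follows by contradiction: were it false for some $\eta$, one would obtain $v_ k \in X_ 0$ with $\| v_ k\|_X = 1$, $\| v_ k\|_{X_ 0} \leq 1/\eta$ and $\| v_ k \|_{X_ 1} \to 0$; compactness of $X_ 0 \hookrightarrow X$ yields a subsequence converging in $X$ to some $v$ with $\| v\|_X = 1$, while continuity of $X \hookrightarrow X_ 1$ forces $v = 0$, a contradiction.

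Next I would prove the \emph{time-translation estimate}. Writing $w_ n(t+h) - w_ n(t) = \int_ t^{t+h} \partial_ t w_ n(s)\,ds$ in $X_ 1$ and using H\"older's inequality and Fubini's theorem gives, for $0 < h < T$,
\begin{equation*}
\| w_ n(\cdot + h) - w_ n(\cdot) \| _{L ^r (0,T-h;X_ 1 )} \leq h \, \| \partial_ t w_ n\| _{L ^r (0,T;X_ 1 )} \leq hM ,
\end{equation*}
while the pointwise estimate $\| w_ n(t+h) - w_ n(t)\| _{X_ 1} \leq h^{1-1/r}\, \| \partial_ t w_ n\| _{L ^r (0,T;X_ 1 )}$, together with $\| w_ n\| _{L ^1 (0,T;X_ 1 )} \leq C \| w_ n\| _{L ^q (0,T;X_ 0 )}$, shows that $\{ w_ n\}$ is bounded in $W ^{1,1}(0,T;X_ 1) \hookrightarrow C([0,T];X_ 1)$. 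Interpolating between these $L ^r$- and $L ^\infty$-bounds (or simply using $L ^r \hookrightarrow L ^q$ on the bounded interval $(0,T)$ when $q \leq r$) produces, for every admissible pair $(q,r)$, an exponent $\theta > 0$ and a constant, still denoted $C$, with
\begin{equation*}
\| w_ n(\cdot + h) - w_ n(\cdot) \| _{L ^q (0,T-h;X_ 1 )} \leq C M h^{\theta} \longrightarrow 0 \quad (h \to 0^{+}) , \ \text{uniformly in } n .
\end{equation*}

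Finally I would combine the two ingredients. For $q < \infty$: applying Ehrling's inequality with parameter $\eta$ to $v = w_ n(\cdot + h) - w_ n(\cdot)$ and integrating over $(0,T-h)$ gives
\begin{equation*}
\| w_ n(\cdot + h) - w_ n(\cdot) \| _{L ^q (0,T-h;X )} \leq 2 \eta M + C_ \eta \, C M h^{\theta} ,
\end{equation*}
so choosing first $\eta$ and then $h$ small makes the translations uniformly small in $L ^q (0,T-h;X)$; since in addition $\int_ {t_ 1}^{t_ 2} w_ n(s)\,ds$ stays in a bounded --- hence, by compactness of $X_ 0 \hookrightarrow X$, relatively compact --- subset of $X$ for each $0 < t_ 1 < t_ 2 < T$, the Riesz--Fr\'echet--Kolmogorov criterion for compactness in $L ^q (0,T;X)$ delivers a convergent subsequence. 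For $q = \infty$ I would instead use Arzel\`a--Ascoli: the pointwise bound above (for $r > 1$; when $r = 1$ one invokes the refined continuity criterion of \cite{Simon}) makes $\{ w_ n\}$ equicontinuous into $X_ 1$, hence equicontinuous into $X$ via Ehrling and the $L ^\infty (0,T;X_ 0 )$-bound, while $\{ w_ n(t)\}$ lies in a fixed compact subset of $X$ for each $t$; this produces a subsequence converging in $C([0,T];X)$. I expect the main obstacle to be precisely this final step --- converting smallness of time translations measured in the weaker space $X_ 1$ into genuine relative compactness in $X$ --- which is exactly what Ehrling's inequality, combined with the Kolmogorov (resp.\ Arzel\`a--Ascoli) compactness criterion, is designed to achieve.
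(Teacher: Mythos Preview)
The paper does not supply its own proof of this lemma: it is stated as a quotation of the Aubin--Lions--Simon theorem, with a reference to Corollary~4 of \cite{Simon} and Theorem~II.5.16 of \cite{BF}, and no argument is given. There is therefore nothing in the paper to compare your proposal against.

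That said, your sketch is a faithful outline of the classical proof appearing in those references: Ehrling's interpolation inequality, a uniform time-translation estimate in $X_1$ obtained from the bound on $\partial_t w_n$, and then Simon's $L^q$-valued Kolmogorov criterion (respectively Arzel\`a--Ascoli when $q=\infty$). Two minor remarks. First, the case $q=\infty$, $r=1$ genuinely fails --- already for $X_0=X=X_1=\mathbb{R}$ the embedding $W^{1,1}(0,T)\hookrightarrow C([0,T])$ is not compact (a sequence of piecewise-linear approximations of a step function is bounded in $W^{1,1}$ but has no uniformly convergent subsequence) --- so your parenthetical appeal to a ``refined criterion'' for $r=1$ cannot be made to work; this is an imprecision in the lemma as stated in the paper, and Simon's Corollary~4 explicitly requires $r>1$ when $q=\infty$. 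Second, in the $q=\infty$ case one should be a little careful that $w_n(t)$ actually lies in $X_0$ for \emph{every} $t$ (not merely almost every $t$) with the uniform bound; this follows from the weak lower semicontinuity of $\|\cdot\|_{X_0}$ along the $X_1$-continuous representative, but deserves a sentence.
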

In order to use this theorem in our proof, 
we prepare the following lemma. 
\begin{Le}
 Let $ p > 1$. Then for every $q \geq  1$, 
$W ^{1, p} (\Omega ) \cap  L^ {\infty } (\Omega )$ is compactly embedded in 
$L ^q (\Omega )$.
\label{Embed} 
\end{Le}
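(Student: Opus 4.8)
The plan is to argue directly with sequences. Since $\Omega $ is bounded we have the elementary continuous inclusion $L ^{\infty } (\Omega ) \hookrightarrow L ^q (\Omega )$ (with $\| v \| _{L ^q (\Omega )} \leq |\Omega | ^{1/q } \| v \| _{L ^{\infty } (\Omega )}$), so the embedding $W ^{1,p } (\Omega ) \cap L ^{\infty } (\Omega ) \hookrightarrow L ^q (\Omega )$ is continuous, and it remains to prove compactness. To this end it suffices to show that any sequence $\{ v_ n \}$ which is bounded in $W ^{1,p } (\Omega ) \cap L ^{\infty } (\Omega )$ admits a subsequence converging in $L ^q (\Omega )$; set $M := \sup _ n \bigl( \| v_ n \| _{W ^{1,p } (\Omega )} + \| v_ n \| _{L ^{\infty } (\Omega )} \bigr) < \infty $.

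First I would invoke the classical Rellich--Kondrachov theorem on the bounded domain $\Omega $ with sufficiently smooth boundary: since $p > 1$, the exponent $1$ is subcritical (it lies below $p ^{\ast } = dp /(d-p)$ when $p < d$, and there is no restriction when $p \geq d$), so $W ^{1,p } (\Omega ) \hookrightarrow L ^1 (\Omega )$ is compact. Hence, after passing to a subsequence (not relabeled), there is $v \in L ^1 (\Omega )$ with $v_ n \to v$ strongly in $L ^1 (\Omega )$; passing to a further subsequence we may also assume $v_ n \to v$ almost everywhere in $\Omega $. Since $|v_ n (x)| \leq M$ for a.e.\ $x \in \Omega $, the pointwise limit satisfies $|v(x)| \leq M$ for a.e.\ $x \in \Omega $, so in particular $v \in L ^{\infty } (\Omega )$.

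Next I would upgrade the $L ^1$-convergence to $L ^q$-convergence by a trivial interpolation against the uniform bound: using $|v_ n - v| \leq 2M$ a.e., for any $q \geq 1$ one has
\begin{equation*}
\int _{\Omega } |v_ n - v| ^q \, dx \leq ( 2M ) ^{q-1 } \int _{\Omega } |v_ n - v| \, dx \longrightarrow 0 \qquad (n \to \infty ) ,
\end{equation*}
so $v_ n \to v$ in $L ^q (\Omega )$. Since every bounded sequence in $W ^{1,p } (\Omega ) \cap L ^{\infty } (\Omega )$ therefore has a subsequence converging in $L ^q (\Omega )$, the embedding $W ^{1,p } (\Omega ) \cap L ^{\infty } (\Omega ) \hookrightarrow L ^q (\Omega )$ is compact.

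There is essentially no serious obstacle here; the only points requiring a word of care are that Rellich--Kondrachov compactness applies to $W ^{1,p } (\Omega )$ (rather than $W ^{1,p } _ 0 (\Omega )$), which is legitimate since $\partial \Omega $ is assumed sufficiently smooth, and that the passage to an a.e.-convergent subsequence is standard. If one prefers to avoid even the subcritical Rellich--Kondrachov statement, one may instead use the compactness of $W ^{1,p } (\Omega ) \hookrightarrow L ^p (\Omega )$ and run the same interpolation argument (splitting at the exponent $\min \{ p, q \}$); the structure of the proof is unchanged.
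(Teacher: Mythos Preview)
Your proof is correct and follows essentially the same approach as the paper: both use Rellich--Kondrachov compactness to get strong convergence in some $L^r$ (the paper chooses $r=p$, you choose $r=1$), pass to an a.e.-convergent subsequence, and then use the uniform $L^{\infty}$ bound to upgrade to $L^q$; the paper phrases this last step as Lebesgue dominated convergence, while you write out the equivalent interpolation inequality $|v_n - v|^q \leq (2M)^{q-1}|v_n - v|$. You even anticipate the paper's exact choice in your closing remark about using $W^{1,p}(\Omega)\hookrightarrow L^p(\Omega)$ instead.
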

\begin{proof}
Let $\{ v_k \} _{k\in \N }$ be a sequence which is uniformly bounded in  $W ^{1, p} (\Omega ) \cap  L^ {\infty } (\Omega )$.
Then there exists a subsequence $\{ v_{k _ j } \} _{j\in \N }$
which strongly converges to some $v $ in  $L ^p (\Omega )$ by Rellich's compactness theorem. 
Moreover, we can extract a subsequence $\{ v_{k _ {j _ l } } \} _{l \in \N }$
which converges to $v$ for a.e. $x \in \Omega $. 
Since $v_{k _ {j _ l } } $ is uniformly bounded in $ L^ {\infty } (\Omega )$ and $\Omega $ is a bounded domain, 
we can show that $ v_{k _ {j _ l } } \to v $ strongly in $L ^q (\Omega )$ for every $q \geq 1 $
by Lebesgue's dominant convergence theorem. 
\end{proof}

\section{Proofs of Main Theorems}

In this paper, we consider the following time-discretization of \eqref{P}:
\begin{equation}
\begin{cases}
\DS ~~\frac{ \beta ( u ^{n+1} _ \tau (x ) ) - \beta (u ^{n} _ \tau   (x ) ) }{\tau } 
- \nabla \cdot \alpha ( x , \nabla    u ^{n+1} _ \tau  (x ) ) 
				 = F ^{M} (  \beta ( u ^{n} _ \tau (x )) ) + f ^n _{\tau }  (x ) ~~&~~x \in \Omega , \\[3mm]
\DS ~~ u ^0 _\tau (x) = u _ 0 (x ) ~~&~~ x \in \Omega ,
\end{cases}
\label{P001}
\end{equation}
where $ \tau = T / N $ ($ N \in \N $), and 
\begin{equation}
F ^{M} (s )
= \begin{cases}
~~ M ~~&~~\text{if } F (s) > M , \\
~~ F (s )  ~~&~~\text{if } - M \leq F (s) \leq M , \\
~~ -M ~~&~~\text{if } F (s) < -  M .
\end{cases}
\label{P002} 
\end{equation}
Moreover, let $f ^n _\tau (x) : = \frac{1}{\tau} \int_{n\tau }^{(n+1) \tau } f (x,t ) dt  $.

By Theorem 3.1 of \cite{U}, we can assure that
for given  $ u ^ {n} _{\tau } $
such that 
$ \beta (u ^ {n} _{\tau } )  \in  L ^\infty (\Omega )$, 
\eqref{P001} 
has a unique solution 
$ u ^ {n+1} _{\tau } \in W ^{1 , p } _ 0 (\Omega ) $
which satisfies 
$ \beta (u ^ {n + 1 } _{\tau } )  \in  L ^\infty (\Omega )$.
Then for any given $u _ 0 $ with $\beta (u _ 0 ) \in L ^{\infty } (\Omega ) $,
we obtain the sequence of solutions $u _{\tau } : = \{ u ^0 _{\tau } , u ^1 _{\tau } , \ldots , u ^N _{\tau } \}$
when (H.$\alpha $) and (H.$\beta $) are fulfilled.
Moreover, 
Theorem 3.1 of \cite{U} also implies that 
the solution $u ^{n+1} _{\tau } $  to \eqref{P001} satisfies 
\begin{equation}
\begin{split}
\| \beta (u ^{n+1 } _{\tau }) \| _{L ^{\infty } (\Omega )} 
&\leq \|  
 \beta (u ^{n} _ \tau ) + \tau   F ^{M} ( \beta (  u ^{n} _ \tau )  ) + \tau f ^n _{\tau } \|  _{L ^{\infty } (\Omega )} \\
&\leq \|  
  \beta (u ^{n} _ \tau ) \| _{L ^{\infty } (\Omega ) }
		+ \tau \|   F ^{M} (  \beta (  u ^{n} _ \tau ) ) \| _{L ^{\infty } (\Omega ) } +\tau  \|  f ^n _{\tau } \| _{L ^{\infty }  (\Omega ) } \\
&\leq \|  
  \beta (u ^{n} _ \tau ) \| _{L ^{\infty } (\Omega ) }+ \tau ( M  + \|  f  \| _{L ^{\infty } (Q ) } )  \\
\end{split}
\label{P003} 
\end{equation}
(in this proof, we have to assume $\beta (0 ) = 0 $). 
Summing \eqref{P003}, we obtain 
\begin{equation}
\max _{n=1 , \ldots, N }\| \beta ( u ^{n} _ \tau ) \| _{L^{\infty } (\Omega )} 
\leq \| \beta ( u _0 ) \| _{L^{\infty } (\Omega )} 
+ T ( M + \| f \| _{L^{\infty } (Q )  } ), 
\label{P004} 
\end{equation}
since $ \tau N = T $. 
Here, for given sequence $w _{\tau } : = \{ w ^0 _{\tau } , w ^1 _{\tau } , \ldots , w ^N _{\tau } \}$,
we define
$\Pi _{\tau } w _ \tau , \Lambda  _{\tau } w _ \tau : [0 , T ] \times \Omega \to \R $ by 
\begin{align*}
\Pi _{\tau } w _ \tau (t) 
& := \begin{cases}
~~w ^{n+1} _{\tau } ~~&~~ \text{ if } t \in ( n \tau , (n+1) \tau ]  , \\
~~w ^{0} _{\tau } ~~&~~ \text{ if } t= 0 ,
\end{cases} \\[3mm]
\Lambda  _{\tau } w _ \tau (t) 
& := \begin{cases}
~~\DS \frac{w ^{n+1} _{\tau } - w ^{n} _{\tau }  }{\tau } (t - n\tau ) + w ^{n} _{\tau }  ~~&~~ \text{ if } t \in ( n \tau , (n+1) \tau ]  , \\[3mm]
~~w ^{0} _{\tau } ~~&~~ \text{ if } t= 0 .
\end{cases}
\end{align*}
Then \eqref{P004} implies
\begin{equation}
\| \Pi  _{\tau } \beta ( u ^{n} _ \tau ) \| _{L^{\infty } ( Q )} 
\leq \| \beta ( u _0 ) \| _{L^{\infty } (\Omega )} 
+ T ( M + \| f \| _{L^{\infty }   } (Q )) . 
\label{P005} 
\end{equation}

Multiplying \eqref{P001} by  $ u ^ { n +1 } _{\tau } $ and using \eqref{Pre001}, \eqref{Pre002}, and \eqref{A02}, we have 
\begin{align*}
&\frac{ 1 }{\tau } \LC \int_{\Omega } j ^{\ast} (\beta (  u ^{n+1} _ \tau  ))dx -  \int_{\Omega } j ^{\ast} (\beta (  u ^{n} _ \tau  ))dx  \RC + 
\int _{\Omega } a ( x ,  \nabla  u ^{n+1} _ \tau ) dx  \\
\leq &  
\int _{\Omega }
( F ^{M} (  \beta ( u ^{n} _ \tau ) ) + f ^n _{\tau }  )  u ^{n+1} _ \tau  dx \\
\leq &  
C( M + \| f \| _{L ^{\infty } (Q) } ) \| \nabla  u ^{n+1} _ \tau  \|  _{L ^p (\Omega ) } ,
\end{align*}
where and henceforth, 
$c $ and $C$ denotes the positive general constant which is independent of $N , \tau $. 
By $\tau N =T $ and \eqref{A01}, we obtain 
\begin{equation}
\max _{n = 1 ,\ldots , N } \int_{\Omega } j ^{\ast} (\beta (  u ^{n+1} _ \tau  ))dx
+ \tau \sum_{n=1}^{N}  \| \nabla  u ^{n+1} _ \tau \| ^p _{L^p (\Omega )} \leq C , 
\label{P006} 
\end{equation}
which implies 
\begin{equation}
\int_{0}^{T}  \| \nabla  \Pi  _{\tau } u  _ \tau (t) \| ^p _{L^p (\Omega )} dt \leq C .
\label{P007} 
\end{equation}

We next derive the $L ^{\infty}$-estimate of $u_{\tau }$. 
Let
\begin{equation*}
 K ^r _L (s) :=
 \begin{cases}
~~|s| ^ {r -2} s ~~&~~ |s| \leq L, \\
~~L \sgn s  ~~&~~ |s| > L ,
\end{cases} 
\end{equation*}
with $r >2 $ and $L > 0 $, where $\sgn s := s  / |s | $ is the sign function. 
We multiply \eqref{P001} by $K ^{p (r -1) + 2 } _L(  u ^{n+1} _ \tau  )$.
Since $ K ^{p (r -1) + 2 } _L $ is monotone increasing,
\begin{equation*}
\Gamma (s) = \begin{cases}
~~\DS \int_{0}^{s } K ^{p (r -1) + 2 } _L ( (\beta ^{-1 } )^{\circ} (\sigma ) ) d \sigma ~~&~~\text{if }  s \in 
		\overline{D(K ^{p (r -1) + 2 } _L \circ \beta ^{-1 } )} 
		= \overline{ D( \beta ^{-1 } ) }  , \\
~~+ \infty     ~~&~~ \text{otherwise, }  
\end{cases}
\end{equation*}
becomes a lower semi-continuous convex function and the subdifferential can be defined.
Let $s' \in K ^{p (r -1) + 2 }  \circ \beta ^{-1 } (s_ 0 )$.
If $s > s _ 0 $, 
\begin{equation*}
\Gamma (s) - \Gamma  (s_ 0 ) = \int_{s_0}^{s }  K ^{p (r -1) + 2 } _L ( (\beta ^{-1 } )^{\circ} (\sigma ) ) d \sigma
\geq s' (s  - s_ 0 )
\end{equation*}
by $ s'  \leq K ^{p (r -1) + 2 } _L ( (\beta ^{-1 } )^{\circ} (\sigma ) ) (s ) $
and if 
$s < s _ 0 $, 
\begin{equation*}
\Gamma (s) - \Gamma  (s_ 0 ) =- \int_{s}^{s_0  }  K ^{p (r -1) + 2 } _L ( (\beta ^{-1 } )^{\circ} (\sigma ) ) d \sigma
\geq - s' (s _ 0   - s )
=s' (s   - s _0 )
\end{equation*}
by $ K ^{p (r -1) + 2 } _L ( (\beta ^{-1 } )^{\circ} (\sigma ) ) (s ) \leq s' $.
It follows from this that 
the  subdifferential of $\Gamma $ satisfies 
$\partial \Gamma \supset  K ^{p (r -1) + 2 }  \circ \beta ^{-1 } $. 
Then by $K ^{p (r -1) + 2 } _L(  u ^{n+1} _ \tau  (x) ) \in  K ^{p (r -1) + 2 } _L( \beta ^{-1}  ( \beta ( u ^{n+1} _ \tau  (x )) )) $
for a.e. $x\in \Omega $, we have 
\begin{equation*}
\int_{\Omega }  (\beta ( u ^{n+1} _ \tau ) - \beta (u ^{n} _ \tau )  ) K ^{p (r -1) + 2 } _L(  u ^{n+1} _ \tau  ) dx 
 \geq 
\int_{\Omega }  \Gamma ( \beta ( u ^{n+1} _ \tau ) ) dx  -\int_{\Omega }    \Gamma ( \beta ( u ^{n} _ \tau ) ) dx .
\end{equation*}
The 2nd term of LHS becomes by \eqref{A03}
\begin{align*}
 & \int_{\Omega }  -\nabla  \cdot \alpha  (x , \nabla u ^{n+1} _ \tau  )   K ^{p (r -1) + 2 } _L(  u ^{n+1} _ \tau  ) dx \\
= & 
(p (r -1) + 1 ) \int_{ | u ^{n+1} _ \tau| \leq L  }  
 \alpha  (x , \nabla u ^{n+1} _ \tau  ) \cdot \nabla u ^{n+1} _ \tau    |  u ^{n+1} _ \tau  |  ^{p (r -1)}   dx \\
\geq   & 
c(p (r -1) + 1 ) \int_{ | u ^{n+1} _ \tau| \leq L  }  | \nabla u ^{n+1} _ \tau | ^{p}  |  u ^{n+1} _ \tau  |  ^{p (r -1)}   dx \\
\geq  & 
c \int_{ | u ^{n+1} _ \tau| \leq L  }  |  |  u ^{n+1} _ \tau  |  ^{ (r -1)} \nabla u ^{n+1} _ \tau | ^{p}    dx \\
= & 
\frac{ c }{r ^p } \int_{ | u ^{n+1} _ \tau| \leq L  }  |  \nabla  |  u ^{n+1} _ \tau  |  ^{ r} | ^{p}    dx .
\end{align*}
Moreover, 
\begin{equation*}
\int_{\Omega } ( F ^M ( u ^{n+1} _ \tau  ))  + f^{n} _{\tau } ) K ^{p (r -1) + 2 } _L (  u ^{n+1} _ \tau  ) dx 
\leq 
(M + \| f \| _{L^{\infty }} ) 
\| u ^{n+1} _ \tau   \|  ^{p (r -1) + 1 } _{L ^{p (r -1) + 1 } (\Omega ) } . 
\end{equation*}
Hence we obtain 
\begin{align*}
&\int_{\Omega }  \Gamma ( \beta ( u ^{n+1} _ \tau ) ) dx  -\int_{\Omega }    \Gamma ( \beta ( u ^{n} _ \tau ) ) dx 
+
\frac{c \tau }{r ^p }   \int_{ | u ^{n+1} _ \tau| \leq L  }  |  \nabla  |  u ^{n+1} _ \tau  |  ^{ r} | ^{p}    dx \\
&~~~~~\leq (M + \| f \| _{L^{\infty } (Q) } ) 
\tau   \| u ^{n+1} _ \tau   \|  ^{p (r -1) + 1 } _{L ^{p (r -1) + 1 } (\Omega )} . 
\end{align*}
Summing this from  
$n = 0 $ to $n=  N -1 $ and taking the limit $L \to \infty$, we get 
\begin{align*}
&\int_{\Omega }  \Gamma ( \beta ( u ^{N} _ \tau ) ) dx  -\int_{\Omega }    \Gamma ( \beta ( u _0 ) ) dx 
+
\frac{ c }{r ^p }  \sum_{ n=0 }^{N -1 } \tau  \int_{ \Omega   }  |  \nabla  |  u ^{n+1} _ \tau  |  ^{ r} | ^{p}    dx \\ 
&~~~~~~~~\leq (M + \| f \| _{L^{\infty } (Q) } ) 
 \sum_{ n=0 }^{N -1 } \tau   \| u ^{n+1} _ \tau   \|  ^{p (r -1) + 1 } _{L ^{p (r -1) + 1 }} , 
\end{align*}
which implies 
\begin{equation}
\begin{split}
&\int_{\Omega }  \Gamma ( \beta ( u ^{N} _ \tau ) ) dx  +
\frac{ c }{r ^p }  \int_{0}^{T}   \int_{ \Omega   }  |  \nabla  | \Pi _{\tau } u_ \tau  |  ^{ r} | ^{p}    dx dt \\ 
&~~~~~~~~\leq (M + \| f \| _{L^{\infty } (Q) } ) 
  \| \Pi _{\tau }u ^{n+1} _ \tau   \|  ^{p (r -1) + 1 } _{L ^{p (r -1) + 1 } (Q ) } +
\int_{\Omega }    \Gamma ( \beta ( u _0 ) ) dx .
\end{split}
\label{P008} 
\end{equation}

We note that $\Gamma  \geq 0 $. Moreover,
we have by the  definition of $\Gamma $
\begin{align*}
& 
\Gamma ( \beta ( u _0 ) ) - \Gamma (\beta (0  ) )  \leq (\beta ( u _0 )  - \beta ( 0 ) ) K ^{p (r -1) + 2 } (\beta ^{-1} (\beta (u_0 )))  \\
\Rightarrow ~~ &     
\Gamma ( \beta ( u _0 ) ) \leq | \beta ( u _0 ) | | K ^{p (r -1) + 2 } (u_0 ) |  
\leq | \beta ( u _0 ) | | u_0 |  ^{p (r -1) + 1 }   
\end{align*}
for a.e. $x\in \Omega $.
Then \eqref{P008} yields 
\begin{align*}
&\frac{ c }{r ^p }  \int_{0}^{T}   \int_{ \Omega   }  |  \nabla  | \Pi _{\tau } u_ \tau  |  ^{ r} | ^{p}    dx dt \\ 
&~~~~~~~~\leq (M + \| f \| _{L^{\infty } (Q)} ) 
  \| \Pi _{\tau }u ^{n+1} _ \tau   \|  ^{p (r -1) + 1 } _{L ^{p (r -1) + 1 } (Q ) } +
\| \beta ( u _0 ) \| _{L^{\infty } (\Omega )}
\|  u _0  \|   ^{p (r -1) + 1 } _{L  ^{p (r -1) + 1 } (\Omega )}.
\end{align*}
By the Sobolev inequality, there exists some $\mu > 1 $ such that
\begin{equation*}
\frac{ c }{r ^p }   \|  \Pi _{\tau } u _ \tau   \| ^{rp }_{L ^{\mu rp } (Q )} 
\leq (M + \| f \| _{L^{\infty }(Q)} ) 
 \|  \Pi _{\tau } u _ \tau   \|  ^{p (r -1) + 1 } _{L ^{p (r -1) + 1 } (Q )}  
+
\| \beta ( u _0 ) \| _{L^{\infty } (\Omega )}
\|  u _0  \|   ^{p (r -1) + 1 } _{L  ^{p (r -1) + 1 } (\Omega )}
\end{equation*}
Applying Moser's iteration method, i.e., by letting 
$r = \mu ^{l} $ ($l= 0 , 1 , \ldots $) and taking the limit $ l \to \infty $, we obtain 
\begin{equation}
\| \Pi _{\tau } u _ \tau   \| _{ L ^{\infty } (Q )} \leq C ( \| f\|  _{L ^{\infty } (Q)} , \| u _0 \| _{L^{\infty} (\Omega ) } , 
\| \beta ( u _0 )  \| _{L^{\infty} (\Omega ) } , |\Omega | , |Q| , M  ) . 
\label{P009} 
\end{equation}
In \eqref{P005}  and \eqref{P009}, 
we can derive the uniform boundedness of $\Pi _{\tau } u _ \tau  $ and $\Pi _{\tau } \beta (u _ \tau )   $. 
From this, 
we can deal with  $\beta $ and $ \beta ^{-1}$ as Lipschitz continuous functions 
by  the assumption of local Lipschitz continuity in the theorem.


\subsection{Proof of Theorem \ref{Th01}}

In addition to the estimates given above, we consider the case where 
$\beta ^{-1}$ is locally Lipschitz continuous and derive additional estimates of solutions.  
We exploit the method of  \cite{Ba}. 
Multiply \eqref{P001} by $ u ^{n+1} _{\tau } -   u ^{n} _{\tau } $, we get 
\begin{align*}
& \frac{ 1 }{\tau }
\int _{\Omega }( \beta ( u ^{n+1} _ \tau ) - \beta (u ^{n} _ \tau ) ) ( u ^{n+1} _ \tau  - u ^{n} _ \tau )dx 
+
\int _{\Omega  } a (x , \nabla   u ^{n+1} _ \tau (x)  ) dx  - \int _{\Omega  } a (x , \nabla   u ^{n} _ \tau (x)  ) dx  \\
& \hspace{3cm} \leq  
(M + \| f \| _{L^{\infty } (Q) }) |\Omega | ^{1/2 }
\|   u ^{n+1} _ \tau  - u ^{n} _ \tau \| _{L ^2 (\Omega )} .  
\end{align*}
Since $\Pi _{\tau } u _ \tau  $ and $\Pi _{\tau } \beta (u _ \tau )   $ is uniformly bounded 
with respect to the $L ^{\infty}$-norm, 
we can deal with  $ \beta ^{-1}$ as Lipschitz continuous functions.  
Hence  we have 
\begin{align*}
 | \beta ( u ^{n+1} _ \tau (x )) - \beta (u ^{n} _ \tau  (x )) |
&\geq 
c 
 | \beta ^{-1} ( \beta ( u ^{n+1} _ \tau ) (x) )-  \beta ^{-1} (  \beta (u ^{n} _ \tau (x )) ) | \\
&= 
c |  u ^{n+1} _ \tau (x)  -  u ^{n} _ \tau  (x ) |
\end{align*}
for a.e. $x \in \Omega $ and then 
\begin{align*}
& \frac{ c }{\tau }
\|  u ^{n+1} _ \tau  - u ^{n} _ \tau \| ^2 _{L^2 (\Omega ) } + 
\int _{\Omega  } a (x , \nabla   u ^{n+1} _ \tau (x)  ) dx  - \int _{\Omega  } a (x , \nabla   u ^{n} _ \tau (x)  ) dx \\
& \leq  
(M + \| f \| _{L^{\infty }}) |\Omega | ^{1/2 }
\|   u ^{n+1} _ \tau  - u ^{n} _ \tau \| _{L ^2 (\Omega )} \\
& \leq 
\tau C + 
\frac{c }{2\tau } \|   u ^{n+1} _ \tau  - u ^{n} _ \tau \| ^2 _{L^2 } .
\end{align*}
This yields 
\begin{equation*}
c \sum_{n=0}^{N-1 } \tau 
\LN  \frac{ u ^{n+1} _ \tau  - u ^{n} _ \tau}{ \tau } \RN ^2 _{L^2 } + 
\max _{n= 1 , \ldots , N }\int _{\Omega  } a (x , \nabla   u ^{n} _ \tau (x)  ) dx
\leq 
T  C + 
\int _{\Omega  } a (x , \nabla   u  _ 0  (x)  ) dx . 
\end{equation*}
By \eqref{A01}, 
\begin{equation}
\| \partial   _ t  \Lambda  _ \tau u _{\tau }\| ^2_{L^2 (Q )}
+ 
\sup _{ 0\leq t \leq T } \| \nabla \Pi _{\tau } u _{\tau } (t) \| ^p _{L^p ( \Omega )} \leq 
C . 
\label{P010} 
\end{equation}
If  $ t \in ( n \tau , (n+1) \tau ]$, we get 
\begin{align*}
\| \nabla \Lambda  _{\tau } u _{\tau } (t) \| _{L^p (\Omega )} 
&= \LN 
\frac{   t - n\tau }{\tau } \nabla u  ^{n+1} _{\tau } 
+
\frac{  (n +1  ) \tau -t  }{\tau }  \nabla  u  ^{n} _{\tau }
\RN  _{L^p (\Omega )} \\
&\leq 
 \LN 
 \nabla u  ^{n+1} _{\tau } \RN  _{L^p (\Omega )} 
+
\LN \nabla  u  ^{n} _{\tau }
\RN  _{L^p (\Omega )} ,
\end{align*}
namely, 
\begin{equation}
\sup _{ 0\leq t \leq T } \| \nabla \Lambda  _{\tau } u _{\tau } (t) \| ^p _{L^p ( \Omega )} \leq 
C . 
\label{P011} 
\end{equation}
Similarly, \eqref{P009} yields 
\begin{equation}
\| \Lambda  _{\tau } u _{\tau } (t) \|  _{L^\infty  ( Q  )} \leq 
C . 
\label{P012} 
\end{equation}

According to \eqref{P005}, there exists some subsequence of $\{ \Pi _{\tau } \beta (u _\tau ) \} _{\tau >0 }$
(we omit relabeling) such that 
\begin{equation*}
\Pi _\tau \beta (u _\tau ) \rightharpoonup  \xi ~~~~~~\ast\text{-weakly in } L^\infty (Q) 
\end{equation*}
as $\tau \to 0 $ and its limit $\xi \in L ^{\infty } (Q )$ satisfies 
\begin{equation}
\| \xi \| _{L ^\infty (Q )} \leq \| \beta ( u _0 ) \| _{L^{\infty } (\Omega )} 
+ T ( M + \| f \| _{L^{\infty }   } (Q )) . 
\label{P013} 
\end{equation}
Moreover, \eqref{P010},  
\eqref{P011}, and  
\eqref{P012} leads to 
\begin{align*}
&\Lambda  _\tau  u _\tau  \rightharpoonup  u ~~&\ast\text{-weakly in } L^\infty (0,T ; W^{1, p } _ 0  (\Omega )) \cap 
		L^{\infty } (Q ) , \\
&\partial _t \Lambda  _\tau  u _\tau  \rightharpoonup \partial _t u ~~~&\text{weakly in } L^2  (0,T ; L^2   (\Omega )) .
\end{align*}
with some $u\in L^\infty (0,T ; W^{1, p } _ 0  (\Omega )) \cap 
		L^{\infty } (Q )\cap W ^{1, 2 }   (0,T ; L^2   (\Omega )) $.
By \eqref{A02},
\begin{align*}
\LN - \nabla \cdot \alpha (\cdot , \nabla  \Pi _{\tau }u _{\tau } (t ))\RN _{W ^{-1 , p ' } (\Omega )}
&=\sup _{\| \phi \| _{W ^{1, p } _ 0  (\Omega )} =1 }
 \LA - \nabla \cdot \alpha (\cdot  ,\nabla \Pi _{\tau } u _{\tau } (\cdot , t )) , \phi  \RA _{W ^{1,p }} \\
 &\leq \sup _{\| \phi \| _{W ^{1, p } _ 0  (\Omega )} =1 } 
 \int_{\Omega } | \alpha (x , \nabla  \Pi _{\tau }u _{\tau } (x, t )) | |\nabla  \phi (x ) |  dx \\
 &\leq 
\LC \int_{\Omega } | \alpha (x , \nabla \Pi _{\tau } u _{\tau } (x, t )) | ^{p' } dx \RC ^{1/p' } \\
&\leq 
\LC \int_{\Omega } C ( | \nabla \Pi _{\tau } u _{\tau } (x, t ) | ^p +1 ) dx \RC ^{1/p' } .
\end{align*}
Then from \eqref{P007},  we can derive 
\begin{equation*}
\int_{0}^{T} \LN - \nabla \cdot \alpha (\cdot , \nabla \Pi _{\tau } u _{\tau } (t ))\RN ^{p'}_{W ^{-1 , p ' } (\Omega )}dt 
\leq C 
\end{equation*}
and then there exists some $ \eta \in  L^{p'} (0,T ; W^{-1 , p' } (\Omega ))$ such that
\begin{equation*}
- \nabla \cdot \alpha (\cdot , \nabla \Pi _{\tau } u _{\tau } (t ))  \rightharpoonup  \eta 
 ~~~~~~\text{weakly in } L^{p'} (0,T ; W^{-1 , p' } (\Omega )) .
\end{equation*}
Furthermore, since 
$\partial _{t} \Lambda _\tau  \beta (u _\tau ) (t)
=
 \frac{ \beta (u ^{n+1}_\tau ) -  \beta (u^{n} _\tau )}{\tau } 
 $, 
 we can see that from the equation \eqref{P001}
\begin{equation*}
\partial _{t} \Lambda _\tau  \beta (u _\tau )
 \rightharpoonup  \partial _t \xi   ~~~~~~\text{weakly in } L^{p'} (0,T ; W^{-1 , p' } (\Omega )) .
\end{equation*}

By applying Lemma \ref{A-L-Th} with $X_0 = W ^{1, p } (\Omega ) \cap L ^{\infty } (\Omega )$,
$X = X_1 = L ^2 (\Omega )$
(note that the embedding $X _0 \hookrightarrow X =X_1 $ is compact by Lemma \ref{Embed}), 
we can see 
from \eqref{P010}, \eqref{P011}, and  
\eqref{P012} that there exists some subsequence of $\{ \Lambda  _\tau  u _\tau \}  _{\tau >  0 }$
such that 
\begin{equation*}
\Lambda  _\tau  u _\tau  \to u ~~~~~~\text{strongly in } C ([ 0,T] ; L^2   (\Omega )) . 
\end{equation*}
Here when $t \in (n\tau , (n+1 ) \tau ]$, 
\begin{align*}
\|  \Pi _{\tau } u _\tau (t) -  \Lambda  _{\tau } u _\tau (t)  \| _{L^2 (\Omega )}
=& \LN  u_\tau - \frac{u ^{n+1}_{\tau } - u ^{n}_{\tau } }{\tau } (t - n\tau ) -u ^{n}_{\tau }   \RN _{ L^2 (\Omega )} \\
=& ( (n+1) \tau -t ) \LN  \frac{u ^{n+1}_{\tau } - u ^{n}_{\tau } }{\tau }    \RN _{L^2 (\Omega ) } , 
\end{align*}
which leads to 
\begin{equation*}
\int_{n \tau } ^{ (n+1) \tau } 
\|  \Pi _{\tau } u _\tau (t) -  \Lambda  _{\tau } u _\tau (t)  \| ^{2}_{L^2 (\Omega )} dt
= 
\frac{1}{3} \tau ^{2 +1} \LN    \frac{u ^{n+1}_{\tau } - u ^{n}_{\tau } }{\tau }   \RN ^{2 } _{ L ^2 (\Omega ) } .
\end{equation*}
Hence we have by $ \sum_{n=0}^{ N -1 } $
\begin{equation*}
\int_{0 } ^{ T } 
\|  \Pi _{\tau } u _\tau (t) -  \Lambda  _{\tau } u _\tau (t)  \| ^{2 }_{L^ 2 } dt 
= 
\frac{1}{3} \tau ^{2 } \int_{0}^{T } \LN  \partial _t \Lambda _\tau u _{\tau }   (t)  \RN ^{2 } _{L^2 } dt
\leq C \tau ^2  ,  
\end{equation*}
which immediately implies 
\begin{equation*}
\Pi _\tau  u _\tau  \to u ~~~~~~\text{strongly in } 	L^2 ( 0,T ; L^2   (\Omega )) .
\end{equation*}
Therefore, we can extract some subsequence of $\{ \Pi  _\tau  u _\tau \}  _{\tau >  0 }$
which satisfies 
$\Pi _\tau  u _\tau (x, t ) \to u (x,t )$ a.e. $ \Omega \times (0,T )$. 
Since $\beta $ is a continuous map, 
we obtain 
$\Pi _\tau \beta ( u _\tau  ) \to \beta (u  ) $ for a.e. $ \Omega \times (0,T ) $. 
By Lebesgue's dominant convergence theorem 
\begin{equation*}
\Pi _{\tau } F ^M (\beta ( u_ \tau ))
\to F^ M (\beta (u ) )~~~~~~\text{strongly in } 	L^q ( 0,T ; L^q   (\Omega )) ~~\forall q \geq 1 .
\end{equation*}
By the dominant convergence theorem, we also have 
\begin{equation*}
\Pi _{\tau } f_ \tau  
\to f~~~~~~\text{strongly in } 	L^q ( 0,T ; L^q   (\Omega )) ~~\forall q \geq 1 .
\end{equation*}
Therefore \eqref{P001} weakly converges to 
$\partial _t  \xi + \eta = F ^M (\beta (u )) + f $ in 
$L ^{p'} (0,T ; W ^{-1, p'} (\Omega ))$.

According to 
$\Pi _\tau \beta ( u _\tau ) \to \beta (u ) $ for a.e. $ \Omega \times (0,T )$ and
 \eqref{P005},  we obtain 
\begin{equation*}
\Pi _{\tau } \beta ( u_\tau ) \to \beta ( u ) ~~~~\text{strongly in }   	L^q ( 0,T ; L^q   (\Omega )) ~~\forall q \geq 1 .
\end{equation*}
hence $\xi = \beta (u)$ for a.e. $ \Omega \times (0,T )$. 
We here show $\eta = - \nabla \cdot  \alpha  ( \cdot , \nabla  u )   $ in order to assure 
$u$ is a solution to 
$\partial _t  \beta (u) - \nabla \cdot  \alpha  ( \cdot , \nabla  u )  = F ^M (\beta (u )) + f $. 
Since 
$\Pi _ \tau \beta (u _\tau ) (T) = \beta (u ^N _ {\tau })$ is uniformly  bounded in $L^{\infty } (\Omega )$
from \eqref{P005},  
so there exists a subsequence which weakly converges in $L^2 (\Omega )$. 
Alternatively,  
$u ^N _{\tau } = \Lambda _{\tau } u _{\tau} (T)$ strongly converges to $u (T)$ in  $L^2 (\Omega )$
and 
we can extract a subsequence such that 
$\beta (u ^N _ {\tau }) \rightharpoonup  \beta (u (T)) $ weakly in $L^2$ as $\tau \to 0 $
by the demi-closedness of maximal monotone operator. 
Then since 
$w \mapsto \int_{\Omega } j ^{\ast} (w ) dx  $ is a 
lower semi-continuous convex function on $L^2 (\Omega )$,
\begin{equation*}
\liminf _{\tau \to 0 } \int_{\Omega } j ^{\ast} (\beta (u ^N _ {\tau })  ) dx \geq \int_{\Omega } j ^{\ast} (\beta (u (T))  ) dx 
\end{equation*}
holds. 
Moreover, 
$ \int_{\Omega } j ^{\ast} (\beta (u ^N _ {\tau })  )$ is uniformly bounded as $\tau \to 0 $
by \eqref{P006}
and 
then there exists a subsequence which converges to some number $\theta \in \R $. 
Hence we have  
\begin{equation}
\theta \geq \int_{\Omega } j ^{\ast} (\beta (u (T))  ) dx .
\label{P014}  
\end{equation}

Again, multiplying \eqref{P001} by $u ^{n+1} _{\tau } $, we get 
\begin{align*}
& \int _{\Omega } j ^{\ast} (\beta (u ^{n+1} _ {\tau }) ) dx  - \int _{\Omega } j ^{\ast} (\beta (u ^{n} _ {\tau }) ) dx
+\tau  \LA - \nabla  \cdot \alpha (\cdot , \nabla  u ^{n+1} _{\tau } ),  u ^{n+1} _{\tau } \RA _{W ^{1, p }} \\
&\leq 
\tau \int_{\Omega }  f ^n _{\tau }  u ^{n+1} _{\tau } dx 
+
\tau \int_{\Omega }  F ^M (\beta (u ^{n} _{\tau }))  u ^{n+1} _{\tau } dx .
\end{align*}
Summing this from  $n = 0 $ to $n=  N-1$, we have 
\begin{align*}
& \int _{\Omega } j ^{\ast} (\beta (u ^{N} _ {\tau }) ) dx  - \int _{\Omega } j ^{\ast} (\beta (u _0 ) ) dx
+\int_{0}^{T}  \LA - \nabla  \cdot \alpha (\cdot , \nabla \Pi _\tau  u _{\tau } ),  u  _{\tau } \RA _{W ^{1, p }} dt \\
&\leq 
\int_{0}^{T}  \int_{\Omega }  \Pi _{\tau } f _{\tau } \Pi _{\tau }   u  _{\tau } dx dt  
+
\int_{0}^{T} \int_{\Omega } \Pi _{\tau }   F ^M (\beta (u  _{\tau }))  \Pi _{\tau } u _{\tau } dx dt . 
\end{align*}
Taking the limit as $\tau \to 0 $, we obtain 
\begin{align*}
& \theta   - \int _{\Omega } j ^{\ast} (\beta (u _0 ) ) dx
+\limsup _{\tau \to 0 }\int_{0}^{T}  \LA - \nabla  \cdot \alpha (\cdot , \nabla \Pi _\tau  u _{\tau } ),  u  _{\tau } \RA _{W ^{1, p }} dt \\
&\leq 
\int_{0}^{T}  \int_{\Omega }   f u  dx dt  
+
\int_{0}^{T} \int_{\Omega }   F ^M (\beta (u  ))   u dx dt . 
\end{align*}
By \eqref{P014} and Lemma \ref{IBP}, 
\begin{align*}
&\limsup _{\tau \to 0 }\int_{0}^{T}  \LA  - \nabla  \cdot \alpha (\cdot , \nabla \Pi _\tau  u _{\tau } ),  u  _{\tau } \RA _{W ^{1, p }} dt \\
\leq &
\int_{0}^{T}  \int_{\Omega }   f u  dx dt  
+
\int_{0}^{T} \int_{\Omega }   F ^M (\beta (u  ))   u dx dt
-\int_{\Omega } j ^{\ast} (\beta (u (T))  ) dx
+ \int _{\Omega } j ^{\ast} (\beta (u _0 ) ) dx \\
\leq & 
\int_{0}^{T}  \LA   F ^M (\beta (u  ))  + f  -\partial _t \beta (u ) , u  \RA _{W ^{1,p  }} dx dt  
=
\int_{0}^{T}  \LA  \eta  , u  \RA _{W ^{1,p  }} dx dt  .
\end{align*}
Therefore we can derive 
\begin{equation*}
\limsup _{\tau \to 0 }\int_{0}^{T}  \LA - \nabla  \cdot \alpha (\cdot , \nabla \Pi _\tau  u _{\tau } ) - \eta ,  u  _{\tau } - u  \RA _{W ^{1, p }} dt \leq 0 
\end{equation*}
and we can assure 
$ \eta = - \nabla  \cdot \alpha (\cdot , \nabla  u )  $ by Lemma \ref{Demi}.

Finally, we show that the limit $u $ becomes a time local solution to \eqref{P}. 
By \eqref{P005}, 
$\|  \beta (u) \|  _{L^\infty ( ( 0, T' ) \times \Omega  )} \leq 2 \| \beta (u_ 0 ) \|  _{L^\infty (\Omega )} $
holds with some sufficiently small $0< T' \leq  T $. 
Thus by setting 
\begin{equation*}
M := \max _{|s| \leq 4  \| \beta (u_0 )\|  _{L^\infty (\Omega )} }|F (s)| , 
\end{equation*}
we can assure that 
$F ^M (\beta (u (x , t ))) =F (\beta (u (x ,t )))$ for a.e. $ \Omega \times (0, T ' ) $ and 
then $u $ is a solution to \eqref{P} while $(0, T ' )$.


\subsection{Proof of Theorem  \ref{Th02}} 

We next assume that  $\beta $ is locally Lipschitz continuous. 
Since 
$  \Pi _{\tau } u _ \tau  ,  \Pi _{\tau } \beta ( u _ \tau )   $ is uniformly bounded, 
we can deal with $\beta $ as a Lipschitz continuous function.
Then we have from \eqref{P007}
\begin{equation}
\int_{0}^{T} \| \nabla \Pi _ \tau \beta (  u _{ \tau } )  \| ^p _{L^p } dt 
\leq 
C \int_{0}^{T} \| \nabla  \Pi _ \tau u _{ \tau }   \| ^p _{L^p } dt 
 \leq C .
 \label{P015}
\end{equation}
Since \eqref{P005}, \eqref{P007}, \eqref{P009} hold regardless of the condition of $\beta $,
we can extract subsequences such that
\begin{align*}
&
\Lambda  _\tau \beta (u _\tau ) \rightharpoonup  \xi ~~~&\ast\text{-weakly in } L^\infty (Q) 
		\text{ and weakly in } L^p  (0,T ; W^{1, p } _ 0  (\Omega )) , \\
&
\Lambda  _\tau  u _\tau  \rightharpoonup  u ~~~~~&\ast\text{-weakly in }
		L^{\infty } (Q ) \text{ and weakly in } L^p  (0,T ; W^{1, p } _ 0  (\Omega )), \\
&
- \nabla  \cdot \alpha (\cdot , \nabla \Pi _\tau  u _{\tau } )  \rightharpoonup  \eta  ~~&\text{weakly in } L^{p'} (0,T ; W^{-1 , p' } (\Omega )) , \\ 
&
 \partial _{t} \Lambda _\tau  \beta (u _\tau )
 \rightharpoonup  \partial _t \xi   ~&\text{weakly in } L^{p'} (0,T ; W^{-1 , p' } (\Omega ))  ,
\end{align*}
with some 
\begin{align*}
& \xi \in L ^{\infty} (Q) \cap  L^p  ( 0,T ; W^{1, p } _ 0  (\Omega )) 
\cap  W^{1 , p'} (0,T ; W^{-1 , p' } (\Omega )) ,  \\
&u \in L ^{\infty} (Q) \cap  L^p  ( 0,T ; W^{1, p } _ 0  (\Omega ))  , \\
&\eta \in  L^{p'} (0,T ; W^{-1 , p' } (\Omega )) .
\end{align*}
By  Lemma \ref{A-L-Th} with 
$X _0 = W ^{1,p} (\Omega ) \cap L^{\infty } (\Omega )$, 
$X = L ^{p'} (\Omega )$, 
$X_1 = W ^{-1, p' } (\Omega )$, 
\begin{equation*}
\Lambda  _\tau \beta (u _\tau ) \to \xi ~~~~~\text{strongly in } L^p (0,T ; L^{p'} (\Omega )) . 
\end{equation*}
Then by extracting a subsequence such that 
$\Lambda  _\tau \beta (u _\tau ) \to \xi$ for 
a.e. $ \Omega \times (0,T )$, 
using \eqref{P005}, and applying the dominant convergence theorem, we can show that 
\begin{equation*}
\Lambda  _\tau \beta (u _\tau ) \to  \xi ~~~~~\text{strongly in } L^q (0,T ; L^{q} (\Omega ))
~~~\forall q > 1  . 
\end{equation*}
Lemma \ref{Demi-beta} implies that $\xi = \beta (u)$ a.e. $\Omega \times (0,T )$. 
By the same reasoning as our proof of Theorem \ref{Th01}, 
\begin{equation*}
\Pi _{\tau } F ^M (\beta ( u_ \tau ))
\to F^ M (\beta (u ) )~~~~~~\text{strongly in } 	L^q ( 0,T ; L^q   (\Omega )) ~~\forall q \geq 1 ,
\end{equation*}
and 
\begin{equation*}
\Pi _{\tau } f_ \tau  
\to f~~~~~~\text{strongly in } 	L^q ( 0,T ; L^q   (\Omega )) ~~\forall q \geq 1 .
\end{equation*}
Then \eqref{P001} weakly converges to 
$\partial _t  \xi + \eta = F ^M (\beta (u )) + f $. 

\vspace{3mm}

Let $t = T' \leq T$ be a time at which $\xi $ satisfy \eqref{Lem-AL}.  Namely, assume 
 \begin{equation*}
\int_{\Omega } j ^{\ast} ( \xi ( T' ) )dx - \int_{\Omega } j ^{\ast} ( \beta (u _0 ) ) dx
= \int_{0}^{T' } \LA \partial _t \xi  , u \RA _{W ^{1,p } } dt   . 
\end{equation*}
By applying Lemma \ref{A-L-Th} with 
$X _0 =  L^{\infty } (\Omega ) $ and 
$X = X_1 = W ^{-1, p' } (\Omega )$,
we can show that 
$ \{ \Lambda  _\tau \beta ( u _{\tau } ) \} _{\tau > 0 }$
strongly converges to $\xi $ in 
$C ([0,T] ; W ^{-1, p'} (\Omega ))$.
In particular, 
$\| \Lambda  _\tau \beta ( u _{\tau } ) (T' )  - \xi (T' )\| _{W ^{-1, p' } } \to 0 $ as $\tau \to 0 $. 
On the other hand, since  $\Lambda _\tau \beta (u _{\tau } ) (T' )$ is uniformly bounded in $ L ^{\infty } (\Omega )$, 
$\{ \Lambda _\tau \beta (u _{\tau } ) (T' ) \} _{\tau > 0 }$ has $\ast$-weakly convergent subsequence in $L ^{\infty } (\Omega )$. 
Let $\zeta $ be its limit. 
Then for every $v \in W ^{1, p } _0 (\Omega )$,
\begin{equation*}
\int_{\Omega }   \Lambda  _\tau \beta ( u _{\tau } )  (x, T' ) v (x) dx \to \int _{\Omega } \zeta  (x) v (x) dx 
=\LA  \zeta ,  v \RA _{W ^{1,p }}
\end{equation*}
and 
\begin{equation*}
\int_{\Omega }   \Lambda  _\tau \xi _{\tau }  (x, T'  ) v (x) dx 
=
\LA \Lambda  _\tau \xi _{\tau }  (T'  ) , v \RA _{W ^{1,p  }}
\to
\LA \xi  (T' ) , v \RA _{W ^{1,p  }}
\end{equation*}
hold as $\tau  \to 0 $,  hence $\zeta   = \xi ( T'  )$.

Let $ m  \tau  < T' \leq (m +1) \tau $ with some $m = 0 , \ldots, N -1 $. 
Since $j^{\ast }$ is convex, 
\begin{align*}
&\int_{\Omega } j^{\ast } (\Lambda _{\tau } \beta (u _{\tau })(T' )  ) dx  \\
=&
\int_{\Omega } j^{\ast } \LC 
\frac{\beta  ( u ^{n+1} _{\tau } )  -\beta (u ^{n} _{\tau } )  }{\tau } ( T'  - m\tau ) + \beta ( u ^{n} _{\tau } )
 \RC dx  \\
=&
\int_{\Omega } j^{\ast } \LC 
\frac{  T'  - m\tau   }{\tau }  \beta  ( u ^{n+1} _{\tau } )+
\frac{  ( m+1 ) \tau - T'     }{\tau }   \beta ( u ^{n} _{\tau } )
 \RC dx  \\
\leq &
\frac{  T' - m\tau   }{\tau } 
\int_{\Omega } j^{\ast } \LC 
 \beta  ( u ^{n+1} _{\tau } ) \RC dx  
+
\frac{  ( m+1 ) \tau - T'     }{\tau } 
\int_{\Omega } j^{\ast } \LC   \beta ( u ^{n} _{\tau } )
 \RC dx  \\
 \leq &
\int_{\Omega } j^{\ast } \LC 
 \beta  ( u ^{n+1} _{\tau } ) \RC dx  
+
\int_{\Omega } j^{\ast } \LC   \beta ( u ^{n} _{\tau } )
 \RC dx   
\end{align*}
By \eqref{P006}, the RHS is uniformly bounded and then 
$\{ \int_{\Omega } j^{\ast } (\Lambda _{\tau } \beta (u _{\tau }  ) (T' )) dx  \} _{\tau > 0 }$ is also uniformly bounded. 
Extract a convergent subsequence and let its limit be $\theta ' $. 
 Since  $w \mapsto \int_{\Omega } j^{\ast } (w ) dx $ is a lower semi-continuous convex function on 
 $ L ^{p'}(\Omega )$, we have 
 \begin{equation}
\int_{\Omega } j^{\ast } (\xi  (T' ) ) dx  
 \leq 
\liminf _{\tau \to 0 } 
\int_{\Omega } j^{\ast } (\Lambda _{\tau } \beta (u _{\tau }  ) ) (T' )dx 
\leq \theta ' . 
\label{P016}
\end{equation}

Multiplying \eqref{P001} by $u ^{n+1} _{\tau } $, we get  
\begin{align*}
& \int _{\Omega } j ^{\ast} (\beta (u ^{n+1} _ {\tau }) ) dx  - \int _{\Omega } j ^{\ast} (\beta (u ^{n} _ {\tau }) ) dx
+\tau  \LA - \nabla  \cdot \alpha (\cdot , \nabla   u ^{n+1 }_{\tau } ),  u ^{n+1} _{\tau } \RA _{W ^{1, p }} \\
&\leq 
\tau \int_{\Omega }  f ^n _{\tau }  u ^{n+1} _{\tau } dx 
+
\tau \int_{\Omega }  F ^M (\beta (u ^{n} _{\tau }))  u ^{n+1} _{\tau } dx . 
\end{align*}
Summing this from $n= 0 $ to $m -1 $, we have 
\begin{align*}
& \int _{\Omega } j ^{\ast} (\beta (u ^{m } _ {\tau }) ) dx  - \int _{\Omega } j ^{\ast} (\beta (u _0 ) ) dx
+\int_{0}^{m\tau }  \LA - \nabla  \cdot \alpha (\cdot , \nabla \Pi _\tau  u _{\tau } ) ,  u  _{\tau } \RA _{W ^{1, p }} dt \\
&\leq 
\int_{0}^{m\tau }  \int_{\Omega }  \Pi _{\tau } f _{\tau } \Pi _{\tau }   u  _{\tau } dx dt  
+
\int_{0}^{m\tau } \int_{\Omega } \Pi _{\tau }   F ^M (\beta (u  _{\tau }))  \Pi _{\tau } u _{\tau } dx dt . 
\end{align*}
Moreover, 
\begin{align*}
&\frac{T' - m \tau }{\tau} \int _{\Omega } j ^{\ast} (\beta (u ^{m+1} _ {\tau }) ) dx  
		-\frac{T' - m \tau }{\tau}   \int _{\Omega } j ^{\ast} (\beta (u ^{m} _ {\tau }) ) dx \\
&		
+(T' - m \tau )  \LA - \nabla  \cdot \alpha (\cdot , \nabla   u ^{m+1 }_{\tau } ),  u ^{m+1} _{\tau } \RA _{W ^{1, p }} \\
&~~~~~~ \leq 
(T' - m \tau ) \int_{\Omega }  f ^m _{\tau }  u ^{m+1} _{\tau } dx 
+
(T' - m \tau )  \tau \int_{\Omega }  F ^M (\beta (u ^{m} _{\tau }))  u ^{m+1} _{\tau } dx .
\end{align*}
Then we obtain 
\begin{align*}
&
 \frac{T' - m \tau }{\tau}  \int _{\Omega } j ^{\ast} (\beta (u ^{m+1 } _ {\tau }) ) dx 
		+\frac{ (m+1) \tau - T' }{\tau}   \int _{\Omega } j ^{\ast} (\beta (u  ^{m } _ {\tau }) ) dx \\
		&
				- \int _{\Omega } j ^{\ast} (\beta (u _0 ) ) dx
+\int_{0}^{T' }  \LA - \nabla  \cdot \alpha (\cdot , \nabla \Pi _\tau  u _{\tau } ) ,  u  _{\tau } \RA _{W ^{1, p }} dt \\
&~~~~~~ \leq 
\int_{0}^{T' }  \int_{\Omega }  \Pi _{\tau } f _{\tau } \Pi _{\tau }   u  _{\tau } dx dt  
+
\int_{0}^{T' } \int_{\Omega } \Pi _{\tau }   F ^M (\beta (u  _{\tau }))  \Pi _{\tau } u _{\tau } dx dt
\end{align*}
By the convexity of $j ^{\ast }$, 
\begin{align*}
&
 \frac{T' - m \tau }{\tau}  \int _{\Omega } j ^{\ast} (\beta (u ^{m+1 } _ {\tau }) ) dx 
		+\frac{ (m+1) \tau - T' }{\tau}   \int _{\Omega } j ^{\ast} (\beta (u  ^{m } _ {\tau }) ) dx \\
&\geq 
 \int _{\Omega } j ^{\ast} \LC   \frac{T' - m \tau }{\tau}  \beta (u ^{m+1 } _ {\tau })  +\frac{ (m+1) \tau - T' }{\tau}  \beta (u  ^{m } _ {\tau }) \RC dx \\ 
&= 
 \int _{\Omega } j ^{\ast} \LC   \frac{
\beta (u ^{m+1 } _ {\tau }) - \beta (u  ^{m } _ {\tau }) }{\tau}    (T' -m \tau )
			+  \beta (u  ^{m } _ {\tau }) \RC dx \\
&= 
 \int _{\Omega } j ^{\ast} \LC \Lambda _{\tau } \beta (u _ {\tau }) (T')  \RC dx 
\end{align*}
Therefore
\begin{align*}
&
 \int _{\Omega } j ^{\ast} \LC \Lambda _{\tau } \beta (u _ {\tau }) (T')  \RC dx 
				- \int _{\Omega } j ^{\ast} (\beta (u _0 ) ) dx
+\int_{0}^{T' }  \LA  - \nabla  \cdot \alpha (\cdot , \nabla \Pi _\tau  u _{\tau } ) ,  u  _{\tau } \RA _{W ^{1, p }} dt \\
&\leq 
\int_{0}^{T' }  \int_{\Omega }  \Pi _{\tau } f _{\tau } \Pi _{\tau }   u  _{\tau } dx dt  
+
\int_{0}^{T' } \int_{\Omega } \Pi _{\tau }   F ^M (\beta (u  _{\tau }))  \Pi _{\tau } u _{\tau } dx dt , 
\end{align*}
which yields as 
$\tau \to 0$
\begin{align*}
& \theta '  - \int _{\Omega } j ^{\ast} (\beta (u _0 ) ) dx
+\limsup _{\tau \to 0 }\int_{0}^{T'}  \LA  - \nabla  \cdot \alpha (\cdot , \nabla \Pi _\tau  u _{\tau } ) ,  u  _{\tau } \RA _{W ^{1, p }} dt \\
&\leq 
\int_{0}^{T'}  \int_{\Omega }   f u  dx dt  
+
\int_{0}^{T'} \int_{\Omega }   F ^M (\beta (u  ))   u dx dt . 
\end{align*}
By \eqref{P016} and Lemma \ref{IBP}, 
\begin{align*}
&\limsup _{\tau \to 0 }\int_{0}^{T ' }  \LA  - \nabla  \cdot \alpha (\cdot , \nabla \Pi _\tau  u _{\tau } ) ,  u  _{\tau } \RA _{W ^{1, p }} dt \\
\leq &
\int_{0}^{T'}  \int_{\Omega }   f u  dx dt  
+
\int_{0}^{T'} \int_{\Omega }   F ^M (\beta (u  ))   u dx dt \\
& ~~~~~~~~ -\int_{\Omega } j ^{\ast} (\xi (T')   ) dx
+ \int _{\Omega } j ^{\ast} (\beta (u _0 ) ) dx \\
\leq & 
\int_{0}^{T'}  \LA   F ^M (\beta (u  ))  + f  -\partial _t \beta (u ) , u  \RA _{W ^{1,p  }} dx dt  
=
\int_{0}^{T'}  \LA  \eta  , u  \RA _{W ^{1,p  }} dx dt  ,
\end{align*}
i.e., 
\begin{equation*}
\limsup _{\tau \to 0 }\int_{0}^{T}  \LA  - \nabla  \cdot \alpha (\cdot , \nabla \Pi _\tau  u _{\tau } )  - \eta ,  u  _{\tau } - u  \RA _{W ^{1, p }} dt \leq 0 .  
\end{equation*}
Hence by Lemma \ref{Demi}, we obtain $\eta = - \nabla  \cdot \alpha (\cdot , \nabla   u  )  $. 
Therefore $u $ is the solution to $\partial _t \beta (u ) - \nabla  \cdot \alpha (\cdot , \nabla   u  )  = F ^{M} (\beta (u) ) +f $,
and then 
we can assure that $F^  M ( \beta (u)) = F( \beta (u)) $ for a.e. $\Omega \times (0 , T '' ) $ with some sufficiently small $T'' \leq T $
by exactly the same argument as that for Theorem \ref{Th01}.


\subsection{Proof of Theorem  \ref{Th03}}

Define 
\begin{equation*}
\psi ^M (s ) 
:=
\int_{0}^{s} F^M ( \beta (\sigma )) d \sigma  . 
\end{equation*}
Since $F^ M $ is monotone increasing, 
$\psi ^M : \R \to \R $ is a convex function with domain $D (\psi ^M ) = \R $,
which satisfies 
\begin{equation*}
|\psi ^M (s )  | \leq 
M |s | . 
\end{equation*}
Moreover, 
\begin{equation*}
\psi ^M  (s) - \psi ^ M (s_ 0 ) = \int_{s_0}^{s }  F^M ( \beta (\sigma )) d \sigma
\geq F^M ( \beta (s _ 0  ))  (s  - s_ 0 )
\end{equation*}
holds for any $s \in \R  $, since  $F^ M \circ \beta  $ is monotone increasing. 
This implies that  $F^M ( \beta (s _ 0  )) \in \partial \psi ^M (s _ 0 )$.

Multiplying \eqref{P001} by $  u^{n+1} _{\tau } - u^{n} _{\tau } $, 
we obtain by the definition of the subdifferential 
\begin{equation*}
\int _{\Omega } F^M (\beta (u ^{n} _{\tau })) ( u^{n+1} _{\tau } - u^{n} _{\tau } ) dx
\leq
\psi ^M ( u^{n+1} _{\tau } ) - \psi ^M ( u^{n} _{\tau } ) 
\end{equation*}
and then by the monotonicity of $\beta $
\begin{equation*}
\int_{\Omega } a ( x , \nabla u ^{n+1 } _{\tau } ) dx 
-
\int_{\Omega } a ( x , \nabla u ^{n } _{\tau } ) dx 
\leq 
\psi ^M ( u^{n+1} _{\tau } ) - \psi ^M ( u^{n} _{\tau } ) .
\end{equation*}
Therefore we have 
\begin{align*}
\sup _{n =1, \ldots, N } \int_{\Omega } a ( x , \nabla u ^{n+1 } _{\tau } ) dx 
& \leq  \int_{\Omega } a ( x , \nabla u _0  ) dx 
+ \max _{n=1 \ldots, N } \int_{\Omega } \psi ^M (u ^{n}_\tau ) dx
-
 \int_{\Omega } \psi ^M (u_ 0  ) dx \\
& \leq  \int_{\Omega } a ( x , \nabla u _0  ) dx 
+
|\Omega | M 
(  \max _{n=1 \ldots, N } \| u ^{n}_\tau  \| _{L^{\infty } (\Omega )} 
+ \| u _ 0 \| _{L^{\infty } (\Omega )} ). 
\end{align*}
By \eqref{A01} and \eqref{P009}, the RHS is uniformly bounded as $\tau \to 0 $ if $u _ 0 \in W ^{1, p } _ 0 (\Omega ) \cap L ^{\infty } (\Omega )$.  
Hence in addition to \eqref{P005}, \eqref{P007}, and \eqref{P009},
we also obtain  
\begin{equation*}
\sup _{ 0\leq t \leq T } \| \nabla  \Pi _ \tau u _{ \tau }   \| ^p _{L^p }  
 \leq C 
\end{equation*}
and then the limit $u $ belongs to $L ^{\infty } (0,T ; W ^{1, p } _ 0 (\Omega ))$. 
The remaining arguments are exactly the same as in Theorem \ref{Th02}.


\section{Uniqueness of Solution}

In the end of this paper, we discuss the uniqueness and 
the comparison principle  of solutions.  
Assume is this section 
\begin{equation*}
\alpha (x , z ) = \alpha  (z) . 
\end{equation*}
Henceforth, let $ (s  ) _ + := \max \{ s ,  0 \} $ be the positive part of $s$. 
\begin{Th}
Let $ u _{ 0 i } , \beta (u _ {0i }) \in L^\infty $, $f _i \in L^ \infty  (0,T ;  L^ \infty  (\Omega ))$ ($i=1, 2$)
and $T' > 0 $ be a maximal existence time of solution to \eqref{P}. 
If $F $ is locally Lipschitz continuous,
\begin{equation}
\begin{split}
&\| \beta (u _1 (t ) ) -   \beta (u _2 ( t ) ) \| _{L^1 (\Omega ) } \\
& ~~~\leq e ^{Lt} \|  \beta (u _{01} ) -   \beta (u _{02} ) \|  _{L^1 (\Omega ) }
+ \int_{0}^{t} e^{L (t-s )}  \|   f _1 (x ,t ) -f _2 (x ,t ) \| _{L^1 (\Omega ) } dt
\end{split}
\label{Uni01} 
\end{equation}
holds for every  $t \in ( 0 , T ' )   $, where 
$u _ i $ is solution to \eqref{P} with $ u _ 0 = u_{0i }$ and $f = f _ i $, and  $L $ is the Lipschitz constant of $F$. 
Moreover, if $F $
is locally Lipschitz continuous and monotone increasing, 
\begin{equation}
\begin{split}
&\int _{\Omega } \LC \beta (u _1 (x, t ) ) -   \beta (u _2 (x, t ) )  \RC _+ dx \\
& ~~~\leq e ^{Lt} \int _{\Omega } \LC \beta (u _{01} (x ) ) -   \beta (u _{02} (x ) )  \RC _+ dx \\
&~~~~~~~~~~~+ \int_{0}^{t} e^{L (t-s )}  \int _{\Omega } \LC  f _1 (x,t) -f _2 (x,t)    \RC _+  dx dt
\end{split}
\label{Uni02}
\end{equation}
holds for every $t \in ( 0 , T ' )   $. 
Especially, if $ \beta (u _{01}  ) \leq    \beta (u _{02}  ) $, $f _1 \leq f _2 $ almost everywhere, 
then $ \beta (u _1 (x, t ) ) \leq   \beta (u _2 (x, t ) )  $ for a.e. $\Omega \times (0 , T ' )$. 
\end{Th}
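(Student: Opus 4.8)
The plan is to subtract the two equations and test the difference with a (one‑sided) regularized sign function, then close by Gronwall's inequality. Write $w_i:=\beta(u_i)$ for $i=1,2$; on any interval $(0,t')$ with $t'<T'$ the regularity furnished by Theorems~\ref{Th01}--\ref{Th03} gives $w_i\in W^{1,p'}(0,t';W^{-1,p'}(\Omega))\cap L^\infty(\Omega\times(0,t'))$ and $u_i\in L^p(0,t';W^{1,p}_0(\Omega))\cap L^\infty(\Omega\times(0,t'))$. Let $R:=\max_i\|w_i\|_{L^\infty(\Omega\times(0,t'))}$ and let $L$ be a Lipschitz constant of $F$ on $[-R,R]$. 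Subtracting the two instances of \eqref{P} yields, in $L^{p'}(0,t';W^{-1,p'}(\Omega))$,
\begin{equation*}
\partial_t(w_1-w_2)-\nabla\cdot\bigl(\alpha(\nabla u_1)-\alpha(\nabla u_2)\bigr)=F(w_1)-F(w_2)+(f_1-f_2).
\end{equation*}

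For \eqref{Uni01}, let $p_\delta\in C^1(\R)$ ($\delta>0$) be nondecreasing with $p_\delta(0)=0$, $|p_\delta|\le 1$, $p_\delta'\ge 0$, and $p_\delta\uparrow\sgn$ pointwise and monotonically as $\delta\downarrow 0$. Since $u_1-u_2\in L^p(0,t';W^{1,p}_0(\Omega))$, the function $p_\delta(u_1-u_2)$ is an admissible test function, and I would pair it with the difference equation over $(0,t)$. The principal part contributes
\begin{equation*}
\int_0^t\!\!\int_\Omega\bigl(\alpha(\nabla u_1)-\alpha(\nabla u_2)\bigr)\cdot\nabla(u_1-u_2)\,p_\delta'(u_1-u_2)\,dx\,ds\ge 0
\end{equation*}
by \eqref{A03} and $p_\delta'\ge 0$; the source terms are controlled by $|(F(w_1)-F(w_2))p_\delta(u_1-u_2)|\le L|w_1-w_2|$ and $|(f_1-f_2)p_\delta(u_1-u_2)|\le|f_1-f_2|$; and the time term converges, as $\delta\downarrow 0$, to $\|w_1(t)-w_2(t)\|_{L^1(\Omega)}-\|\beta(u_{01})-\beta(u_{02})\|_{L^1(\Omega)}$. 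Passing to the limit gives
\begin{equation*}
\|w_1(t)-w_2(t)\|_{L^1(\Omega)}\le\|\beta(u_{01})-\beta(u_{02})\|_{L^1(\Omega)}+L\!\int_0^t\!\|w_1(s)-w_2(s)\|_{L^1(\Omega)}ds+\int_0^t\!\|f_1(s)-f_2(s)\|_{L^1(\Omega)}ds,
\end{equation*}
and Gronwall's inequality yields \eqref{Uni01} on $(0,t')$, hence on $(0,T')$ since $t'<T'$ was arbitrary.

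For \eqref{Uni02}, I would repeat this with $p_\delta$ replaced by a one‑sided regularization $p_\delta^+\in C^1(\R)$, nondecreasing, vanishing on $(-\infty,0]$, with $0\le p_\delta^+\le 1$, $(p_\delta^+)'\ge 0$, and $p_\delta^+\uparrow\sgn_+$. The principal part is again $\ge 0$; since $F$ is monotone increasing and $\beta$ nondecreasing, on $\{u_1>u_2\}$—the only set where $p_\delta^+(u_1-u_2)\ne 0$—one has $w_1\ge w_2$, so $(w_1-w_2)_+=w_1-w_2$ and $0\le F(w_1)-F(w_2)\le L(w_1-w_2)=L(w_1-w_2)_+$, while everywhere $(f_1-f_2)p_\delta^+(u_1-u_2)\le(f_1-f_2)_+$; and the time term now tends to $\int_\Omega(w_1(t)-w_2(t))_+dx-\int_\Omega(\beta(u_{01})-\beta(u_{02}))_+dx$. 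Gronwall's inequality then gives \eqref{Uni02}, and the comparison statement is the particular case in which both terms on the right‑hand side of \eqref{Uni02} vanish.

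The main obstacle is the convergence of the time term, i.e. the chain‑rule identity
\begin{equation*}
\lim_{\delta\downarrow 0}\int_0^t\bigl\langle\partial_s(w_1-w_2),\,p_\delta(u_1-u_2)\bigr\rangle_{W^{1,p}}ds=\|w_1(t)-w_2(t)\|_{L^1(\Omega)}-\|w_1(0)-w_2(0)\|_{L^1(\Omega)}
\end{equation*}
together with its one‑sided analogue. This is the genuine ``doubly nonlinear'' difficulty: the argument $u_1-u_2$ of $p_\delta$ and the function $w_1-w_2=\beta(u_1)-\beta(u_2)$ whose time derivative is being tested are not functions of one another, so the usual chain rule does not apply directly. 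The point is that $\beta$ is nondecreasing, so $\sgn(u_1-u_2)$ and $\sgn(\beta(u_1)-\beta(u_2))$ coincide wherever both are nonzero; with this one justifies the identity by a time‑regularization (Steklov averaging) argument parallel to Lemma~\ref{IBP}. Alternatively—and this is the route suggested by the structure of the paper—one observes that the identity is exactly the $L^1$‑contraction (respectively order‑preservation) for the unperturbed equation $\partial_t\beta(u)-\nabla\cdot\alpha(\nabla u)=g$ established in \cite{U}; applying that result with $g=g_i:=F(\beta(u_i))+f_i$ and absorbing $\|g_1-g_2\|_{L^1(\Omega)}\le L\|w_1-w_2\|_{L^1(\Omega)}+\|f_1-f_2\|_{L^1(\Omega)}$ (respectively its positive‑part bound) by Gronwall's inequality gives \eqref{Uni01} and \eqref{Uni02}.
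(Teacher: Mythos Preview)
Your proposal is correct, and the ``alternative'' route you describe in the last paragraph is precisely what the paper does: it invokes Theorem~5.6 of \cite{U} to obtain
\[
\int_\Omega(\beta(u_1(t))-\beta(u_2(t)))_+\,dx\le\int_\Omega(\beta(u_{01})-\beta(u_{02}))_+\,dx+\int_0^t\!\!\int_\Omega\bigl(F(\beta(u_1))+f_1-F(\beta(u_2))-f_2\bigr)_+\,dx\,d\tau,
\]
splits the integrand via $(s_1+s_2)_+\le(s_1)_++(s_2)_+$, bounds $(F(\beta(u_1))-F(\beta(u_2)))_+\le L(\beta(u_1)-\beta(u_2))_+$ using monotonicity and Lipschitz continuity of $F$, and applies Gronwall to get \eqref{Uni02}; \eqref{Uni01} is obtained by symmetrizing (swapping $i=1,2$ and adding).

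Your primary approach---testing directly with a regularized sign of $u_1-u_2$---is the natural strategy for proving such $L^1$-contraction results from scratch, and you correctly isolate the genuine difficulty (the chain rule for $\langle\partial_t(\beta(u_1)-\beta(u_2)),p_\delta(u_1-u_2)\rangle$ when $\beta$ may be flat). But note that resolving that difficulty is essentially the content of the cited result in \cite{U}, so the two routes are not really independent: your direct argument would amount to reproving Theorem~5.6 of \cite{U} inside this proof, whereas the paper simply quotes it.
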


\begin{proof}
By Theorem 5.6 of \cite{U}, we can see that 
\begin{align*}
&\int _{\Omega } \LC \beta (u _1 (x, t ) ) -   \beta (u _2 (x, t ) )  \RC _+ dx \\
& ~~~\leq \int _{\Omega } \LC \beta (u _{01} (x ) ) -   \beta (u _{02} (x ) )  \RC _+ dx \\
& ~~~+\int_{0}^{t}  \int _{\Omega } \LC F ( \beta (u _{1} (x, \tau  ) ) )  + f _1 (x,\tau ) -  F ( \beta (u _{2} (x, \tau   ) ) ) -f _2 (x,\tau )    \RC _+ dx d\tau  ,
\end{align*}
By $ (s _1  + s_2  ) _+ \leq (s _1  ) _+ +  ( s_2 ) _+  $, we have 
\begin{align*}
&\int _{\Omega } \LC \beta (u _1 (x, t ) ) -   \beta (u _2 (x, t ) )  \RC _+ dx \\
& ~~~\leq \int _{\Omega } \LC \beta (u _{01} (x ) ) -   \beta (u _{02} (x ) )  \RC _+ dx \\
& ~~~+\int_{0}^{t}  \int _{\Omega } \LC F ( \beta (u _{1} (x, \tau   ) ) ) -  F ( \beta (u _{2} (x, \tau   ) ) )  \RC _+ dx d \tau  
	+\int_{0}^{t}  \int _{\Omega } \LC  f _1 (x,\tau ) -f _2 (x,\tau )    \RC _+ dx d\tau  
\end{align*}
and by replacing $i =1$ with $i =2$, we get 
\begin{align*}
&\int _{\Omega } |  \beta (u _1 (x, t ) ) -   \beta (u _2 (x, t ) ) |  dx \\
& ~~~\leq \int _{\Omega } |  \beta (u _{01} (x ) ) -   \beta (u _{02} (x ) ) | dx \\
& ~~~+\int_{0}^{t}  \int _{\Omega } | F ( \beta (u _{1} (x, \tau   ) ) ) -  F ( \beta (u _{2} (x, \tau   ) ) )  | dx d\tau 
	+\int_{0}^{t}  \int _{\Omega } |  f _1 (x,\tau ) -   f _2 (x,\tau )  |  dx d \tau .
\end{align*}

Assume 
$F $ is locally Lipschitz continuous and monotone increasing, 
If $ \beta ( u _1  (x,t ) ) \geq  \beta ( u _2  (x,t ) ) $,
\begin{align*}
& F ( \beta ( u _1  (x,t ) )  ) -F ( \beta ( u _2  (x,t ) )  )  \\
=&
| F ( \beta ( u _1  (x,t ) )  ) -F ( \beta ( u _2  (x,t ) )  )  | \\
\leq &
L |\beta ( u _1  (x,t ) )   - \beta ( u _2  (x,t ) ) |
=
L ( \beta ( u _1  (x,t ) )   - \beta ( u _2  (x,t ) ) ) , 
\end{align*}
then 
\begin{equation*}
( F ( \beta ( u _1  (x,t ) )  ) -F ( \beta ( u _2  (x,t ) )  )  )_+ 
\leq 
L ( \beta ( u _1  (x,t ) )   - \beta ( u _2  (x,t ) ) ) _+ . 
\end{equation*}
Therefore 
\begin{align*}
&\int _{\Omega } \LC \beta (u _1 (x, t ) ) -   \beta (u _2 (x, t ) )  \RC _+ dx \\
& ~~~\leq \int _{\Omega } \LC \beta (u _{01} (x ) ) -   \beta (u _{02} (x ) )  \RC _+ dx \\
& ~~~+L \int_{0}^{t}  \int _{\Omega } \LC  \beta (u _{1} (x, \tau   ) )  -  \beta (u _{2} (x, \tau   ) )    \RC _+ dx d \tau 
	+\int_{0}^{t}  \int _{\Omega } \LC  f _1 (x,\tau  ) -f _2 (x, \tau  )    \RC _+ dx  d \tau .  
\end{align*}
Hence by the Gronwall inequality, we obtain \eqref{Uni02}. 
We can show \eqref{Uni01} by almost the same argument. 
\end{proof}

\subsection*{Acknowedgements}
The author is supported by 
JSPS Grant-in-Aid for Scientific Research (C) (No.24K06799)
and for Transformative Research Areas (B) (No.25H01453)
and Sumitomo Foundation Fiscal 2022 Grant for Basic Science Research Projects (No.2200250).

\bibliographystyle{unsrt}  


\end{document}